\documentclass[11pt,reqno]{amsart}

\usepackage[margin=1in]{geometry}
\usepackage{mathtools}
\usepackage{amssymb}
\usepackage{booktabs}
\usepackage{tabularx}
\usepackage{array}
\usepackage{enumitem}
\usepackage{microtype}
\usepackage[hidelinks]{hyperref}
\hypersetup{
  pdftitle={Fixed-lowering sl2-triples for Laurent-shift operators: exact stencil endpoints and recurrence locality},
  pdfauthor={Kyle Singh},
  pdfkeywords={sl2-triple, difference operator, Laurent shift algebra, Delta-Appell polynomial, d-orthogonality, Charlier polynomial, recurrence bandwidth}
}

\numberwithin{equation}{section}

\newtheorem{theorem}{Theorem}[section]
\newtheorem{proposition}[theorem]{Proposition}
\newtheorem{corollary}[theorem]{Corollary}
\newtheorem{lemma}[theorem]{Lemma}
\theoremstyle{definition}
\newtheorem{definition}[theorem]{Definition}
\newtheorem{example}[theorem]{Example}
\theoremstyle{remark}
\newtheorem{remark}[theorem]{Remark}

\newcommand{\K}{\mathbb K}
\newcommand{\R}{\mathbb R}
\newcommand{\Z}{\mathbb Z}

\newcommand{\algA}{\mathcal A}
\newcommand{\calP}{\mathcal P}
\newcommand{\calL}{\mathcal L}
\newcommand{\supp}{\operatorname{supp}}
\newcommand{\ev}{\operatorname{ev}}
\newcommand{\Span}{\operatorname{span}}
\newcommand{\id}{\operatorname{Id}}
\newcommand{\ad}{\operatorname{ad}}
\newcommand{\fall}[2]{#1^{\underline{#2}}}

\title[Fixed-lowering $\mathfrak{sl}_2$-triples]{Fixed-lowering $\mathfrak{sl}_2$-triples for Laurent-shift operators: exact stencil endpoints and recurrence locality}
\author{Kyle Singh}
\address{Department of Mathematics, University of Iowa, Iowa City, IA 52242}
\email{kyle-singh@uiowa.edu}
\date{}
\subjclass[2020]{Primary 39A70; Secondary 33C45, 17B10, 42C05}
\keywords{$\mathfrak{sl}_2$-triple, difference operator, Laurent shift algebra, $\Delta$-Appell polynomial, $d$-orthogonality, Charlier polynomial, recurrence bandwidth}

\begin{document}

\begin{abstract}
Let \(S\) be the unit forward shift, and let
\[
\algA=\K[x]\langle S,S^{-1}\rangle\big/\bigl(Sx-(x+1)S\bigr)
\]
be the algebra of finite Laurent-shift operators with polynomial coefficients over a characteristic-zero field \(\K\). We classify all \(\mathfrak{sl}_2\)-triples in \(\algA\) with fixed lowering operator \(F=1-S\). Writing \(D=S-1\) and \(X=xS^{-1}\), every completion is uniquely determined by \(\lambda\in\K\) and \(g\in\K[S,S^{-1}]\)
\[
H=2(X+g)D-\lambda,
\qquad
E=(X+g)^2D-\lambda(X+g).
\]
We give an intrinsic recognition and reconstruction from \(H\) and determine the exact extreme shifts of \(H\) and \(E\). The associated monic eigenpolynomials form a \(\Delta\)-Appell sequence. Multiplication by \(x\) has finite lower recurrence bandwidth exactly when \(g\in S^{-1}\K[S]\);  if this is the case, the bandwidth equals the right endpoint of the Cartan stencil. Otherwise, \(H\) and \(E\) remain finite-order, while the degree recurrence has an infinite tail whose eventual signed coefficients form a polynomial recovering the lowest Laurent term of \(g\). Over \(\R\), positive-measure orthogonality occurs exactly for translated monic Charlier systems.

\end{abstract}

\maketitle

\section{Introduction}

Difference operators with polynomial eigenfunctions often carry a hidden Lie-algebraic or Heisenberg-algebraic structure.  Classical work on the generalized Bochner problem and Lie-algebraic discretization uses such structures to organize finite-dimensional polynomial modules and to pass from differential to difference equations while preserving spectral data \cite{Turbiner1992,Turbiner1993,SmirnovTurbiner1995,SmirnovTurbiner1996}.  Finite-difference realizations of the canonical commutation relation were developed systematically by G\'orski and Szmigielski \cite{GorskiSzmigielski1998,GorskiSzmigielski2000}, and CCR-preserving shears and their adapted polynomial bases appear in \cite{ChryssomalakosTurbiner2001,DimakisMullerHoissenStriker1996,LeviTempestaWinternitz2004}.  On the algebraic side, realizations of $\mathfrak{sl}_2$ in the first Weyl algebra and finite-dimensional Lie subalgebras of that algebra have been studied under substantially broader equivalence relations \cite{Dixmier1968,Joseph1974,RauschSlupinskiTanasa2006}. Here, we ask a more rigid inverse question.  Let
\[
 (Sf)(x)=f(x+1),
 \qquad
 \Delta=S-1,
 \qquad
 F=-\Delta=1-S,
\]
and let $\algA$ be the algebra of finite Laurent-shift operators with polynomial coefficients.  We prescribe the actual operator $F$, not merely its conjugacy class, and ask the following; Which $E,H\in\algA$ complete $F=1-S$ to an $\mathfrak{sl}_2$-triple? This distinction is important since under unrestricted Weyl-algebra automorphisms, a realization may be simplified while its concrete shift stencil is lost.  Here, the powers of $S$ occurring in $H$ and $E$ are part of the data.  Thus, we classify the fiber over the fixed lowering operator inside a localized difference-operator algebra, rather than classify all embeddings up to automorphism. Set
\[
 D=S-1,
 \qquad
 X=xS^{-1}.
\]
Then $[D,X]=1$.  The fixed-lowering condition forces every triple to arise from the standard one by the shear
\[
 X\longmapsto X+g(D),
 \qquad
 g\in\K[D,(1+D)^{-1}]=\K[S,S^{-1}].
\]
The resulting normal form is
\begin{equation}\label{eq:intro-normal-form}
 H=2(X+g)D-\lambda,
 \qquad
 E=(X+g)^2D-\lambda(X+g),
\end{equation}
with unique $(g,\lambda)$. by doing this, one obtains more than a parametrization;  subtracting the universal $x$-dependent part of $H$ gives a necessary and sufficient recognition test and reconstructs $g$, $\lambda$, and $E$ explicitly. The central phenomenon is a sharp separation between locality in the lattice variable and locality in the degree variable, that is to say finite Laurent-shift support does not imply finite recurrence bandwidth in degree. 
The extreme Laurent exponents of $g$ determine the exact left and right endpoints of the lattice operators $H$ and $E$.  By contrast, multiplication by $x$ in the polynomial eigenbasis has finite lower bandwidth only when no power below $S^{-1}$ occurs.  In that one-sided region the two filtrations coincide numerically
\begin{equation}\label{eq:intro-stencil-bandwidth}
 r=\nu_+(H),
 \qquad
 \nu_+(E)=2r-1,
\end{equation}
where $r$ is the exact lower recurrence bandwidth.  If $g$ contains a power below $S^{-1}$, the lattice operators remain finite-order, but the recurrence develops an infinite tail.  The principal new feature of the complementary region is that this tail is not arbitrary since, after removal of an alternating sign, its eventual coefficients are polynomial in the lag, and that polynomial recovers the lowest Laurent exponent and coefficient of $g$.

This two-sided region is not an optional extension of the one-sided one.  Once $F=1-S$ is prescribed as an actual element of $\algA$, the classification returns the full Laurent ring $\K[S,S^{-1}]$; one does not get to restrict to $S^{-1}\K[S]$ by hypothesis. The region below $S^{-1}$ is therefore exactly the part of the fixed-lowering answer that is not retained by classifications formulated only up to automorphism, and Theorem~\ref{thm:tail} shows that this region has a rigid degree-side signature. For instance, $g(S)=cS^{-r}$ with $r\ge2$ produces a perfectly ordinary $\Delta$-Appell sequence with closed generating function
\[
 \sum_{n\ge0}p_n(x)\frac{z^n}{n!}
 =\exp\Bigl(\frac{c}{1-r}\bigl((1+z)^{1-r}-1\bigr)\Bigr)(1+z)^x,
\]
for which $H$ and $E$ are finite-order difference operators with $\nu_-(H)=-r$ and $\nu_+(H)=0$, while the degree recurrence has a nonzero coefficient at every lag; see Example~\ref{ex:negative-power}.  Families of this kind are legitimate members of the fixed fiber over $F$ but lie outside the finite-band setting treated by the classifications cited above. This construction produces explicit scalar difference equations.  If $g(S)=\sum_jc_jS^j$, then the monic eigenpolynomials satisfy
\begin{equation}\label{eq:intro-difference-equation}
 x\bigl(p_n(x)-p_n(x-1)\bigr)
 +\sum_j c_j\bigl(p_n(x+j+1)-p_n(x+j)\bigr)
 =np_n(x),
\end{equation}
independently of $\lambda$.  For example, the nonclassical choice $g(S)=cS$ gives
\[
 x\bigl(p_n(x)-p_n(x-1)\bigr)
 +c\bigl(p_n(x+2)-p_n(x+1)\bigr)=np_n(x)
\]
and a four-term degree recurrence of exact lower bandwidth two; see Example~\ref{ex:quadratic-band}.

The finite-band $\Delta$-Appell families themselves are classical.  Ben Cheikh and Zaghouani characterize the $d$-orthogonal $\Delta_\omega$-Appell sequences and derive their generating functions, recurrences, and difference equations \cite[Theorems~1.1, 2.1, and 2.2]{BenCheikhZaghouani2003}.  Vinet and Zhedanov derive the $d$-orthogonal Charlier recurrence and difference equations from Heisenberg--Weyl automorphisms \cite[Theorem~1]{VinetZhedanov2009}; broader $d$-orthogonal Sheffer families are studied in \cite{ChaggaraMbarki2018}, and $d$-orthogonal sets arising from $\mathfrak{su}(2)$ representations in \cite{GenestVinetZhedanov2012}.  We therefore do not claim a new classification of the finite-band polynomial sequences.

Closest in method is the automorphism approach of Horozov \cite{Horozov2016,Horozov2016cont}, who applies $\sigma=\exp(\ad_{P(\Delta)})$ with $P$ a polynomial without constant term to move the falling-factorial system into $\Delta$-Appell eigenfunction families, obtaining their generating functions and $(d+2)$-term recurrences in the bispectral framework of \cite{VinetZhedanov2009}; the discrete construction \cite{Horozov2016} has a differential companion \cite{Horozov2016cont} in the Weyl and enveloping-$\mathfrak{sl}_2$ algebras.  The resulting sequences coincide with the eigenbases on our one-sided locus $g\in S^{-1}\K[S]$.  The present work differs in two structural respects.  First, the Horozov constructions produce finite $(d+2)$-term recurrences throughout, so the polynomial sequences always have finite lower bandwidth; the entire two-sided region of Theorem~\ref{thm:tail}, on which the lattice operators stay finite-order while the degree recurrence acquires a genuinely infinite tail, lies outside that category.  Second, the automorphism and bispectral formulations identify realizations up to algebra isomorphism and therefore do not retain the concrete Laurent-shift endpoints of Theorem~\ref{thm:support} or the fixed-operator normalization of Theorem~\ref{thm:classification}, in which $F=1-S$ is prescribed as an actual element of $\algA$. The finite-band overlap is thus a special case of the present fixed-lowering family, while the support-sensitive statements below depend on retaining the concrete Laurent-shift realization rather than passing only to its equivalence class. 

The new contribution is support-sensitive and relative to the fixed
operator $F=1-S$.  The point of fixing $F$ is not merely a choice of
normal form; classifications up to Weyl- or Heisenberg-algebra
automorphism may identify realizations whose concrete Laurent-shift
stencils are different, and therefore do not retain the endpoint data
studied here. Likewise, the finite-band $\Delta$-Appell and $d$-orthogonal
classifications cited above describe polynomial sequences on the
one-sided locus, but do not formulate the problem of classifying
completions of the prescribed operator $F=1-S$ inside the finite
Laurent-shift algebra. In particular, those finite-band results do not
cover the two-sided Laurent region considered here, in which $H$ and
$E$ remain finite-order difference operators while multiplication by
$x$ has an infinite degree-side recurrence. The results below therefore concern the fixed fiber over $F$, together with support invariants that are not retained after passing only to an unrestricted equivalence class. 

CCR-preserving shears are familiar, but Theorem~\ref{thm:classification} proves that every completion of the prescribed $F$ inside the finite Laurent-shift algebra is obtained from one unique Laurent shear.  The operator $F$ acts locally nilpotently on $\K[x]$, but it is retained as an actual element of $\algA$, rather than only up to automorphism.  Proposition~\ref{prop:recognition} reconstructs $g$, $\lambda$, and $E$ from the Cartan operator alone.  Theorem~\ref{thm:support} determines the exact extreme shifts of $H$ and $E$, including the boundary cases in which cancellation might otherwise be expected. Theorems~\ref{thm:recurrence} and \ref{thm:numerical-bandwidth} identify the classical finite-band $\Delta$-Appell locus intrinsically inside the fixed-lowering parameter space: it is exactly the one-sided region $g\in S^{-1}\K[S]$, and on that region the endpoint identities \eqref{eq:intro-stencil-bandwidth} hold.
Theorem~\ref{thm:tail} treats the complementary region, which is absent from the finite-band classifications.  Although $H$ and $E$ retain finite lattice support, the degree recurrence has an infinite tail whose eventual polynomial profile recovers the lowest Laurent term of $g$. Theorem~\ref{thm:positive} identifies the exact intersection of this fixed-lowering finite-shift category with positive-measure orthogonality; the resulting systems are precisely translated monic Charlier polynomials.

The scalar $\lambda$ controls the $\mathfrak{sl}_2$ weights but not the polynomial system or either locality invariant.  Section~2 proves the fixed-lowering normal form and distinguishes algebra automorphisms from similarity on the polynomial module.  Section~3 establishes exact stencil endpoints and reconstruction from $H$.  Section~4 constructs and recognizes the polynomial eigenbases and their scalar difference equations.  Section~5 proves the recurrence formula, the stencil--bandwidth identities, and the negative-Laurent tail invariant before relating the finite-band locus to $d$-orthogonality.  Section~6 determines the positive-measure orthogonality locus, and Section~7 gives explicit examples.

\section{The Laurent shift algebra}

Throughout, $\K$ is a field of characteristic zero.  Let $S$ denote the unit forward shift.  The algebra of finite-order difference operators with polynomial coefficients is
\begin{equation}\label{eq:shift-algebra}
 \algA=\K[x]\langle S,S^{-1}\rangle\big/\bigl(Sx-(x+1)S\bigr).
\end{equation}
Thus an element of $\algA$ is a finite sum $\sum_j t_j(x)S^j$ with $t_j(x)\in\K[x]$; no completion in the shift variable is taken.  This finite-order requirement is essential below.  By contrast, after expanding at $D=0$, a Laurent polynomial in $S=1+D$ can have an infinite Taylor series in $D$.

Set
\[
 D=S-1,
 \qquad
 X=xS^{-1},
 \qquad
 R=\K[D,(D+1)^{-1}]=\K[S,S^{-1}].
\]
As a subring of $\K(D)$, the ring $R$ consists of rational functions whose only possible pole is at $D=-1$; in particular, every element of $R$ is regular at $D=0$.  A prime on an element of $R$ denotes formal differentiation with respect to $D$.

\begin{proposition}\label{prop:ore}
The operators $D$ and $X$ satisfy $[D,X]=1$.  Moreover, $\algA$ is isomorphic to the Ore extension $R[X;-\partial_D]$, in which
\[
 Xr=rX-r',\qquad r\in R.
\]
Consequently every $T\in\algA$ has a unique normal form
\[
 T=\sum_{j=0}^{N}X^j r_j(D),
 \qquad
 r_j(D)\in R.
\]
\end{proposition}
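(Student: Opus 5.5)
First I would verify the commutator directly from the defining relation $Sx=(x+1)S$, which also gives $S^{-1}x=(x-1)S^{-1}$. Then
\[
DX=(S-1)xS^{-1}=(x+1)-xS^{-1},\qquad XD=xS^{-1}(S-1)=x-xS^{-1},
\]
so $[D,X]=1$. Next, for $r\in R$ the identity $[r(D),X]=r'(D)$ holds. For polynomials in $D$ this follows from $[D,X]=1$ by the Leibniz rule, since $\ad_X$ is a derivation. For $(1+D)^{-1}=S^{-1}$, I would check directly that $[S^{-1},X]=-S^{-2}$, equivalently $S^{-1}x-xS^{-1}=-S^{-1}$. The derivation property then extends the formula to all of $R$. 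Rearranged, this is $Xr=rX-r'$.

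For the isomorphism, I would define a homomorphism $\Phi:R[X;-\partial_D]\to\algA$ by sending $D\mapsto S-1$ and $X\mapsto xS^{-1}$. The map is well defined on $R$ because $S-1$ is a commuting variable and $S=1+D$ is invertible in $\algA$. It respects the Ore relation by the previous step. For surjectivity, note that $\algA$ is generated by $S^{\pm1}$ and $x$, and $x=XS=X(1+D)$ lies in the image.

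The main obstacle is injectivity, which is exactly the uniqueness of the normal form. My first attempt would use a degree comparison. In $\algA$ every element has a unique expansion $\sum_j t_j(x)S^j$. This follows from the faithful action of $\algA$ on $\K[x]$, or from the standard PBW basis of the localized skew polynomial ring $\K[x][S^{\pm1};\sigma]$ with $\sigma(x)=x+1$; I would take that as the known structure of \eqref{eq:shift-algebra}.

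Now suppose $T=\sum_{j=0}^N X^j r_j(D)\neq0$ with $r_N\neq0$. Write $X^j=(xS^{-1})^j=x(x-1)\cdots(x-j+1)S^{-j}=\fall{x}{j}S^{-j}$. Then
\[
X^N r_N(D)=\fall{x}{N}\,S^{-N}r_N(S-1),
\]
whose $x$-degree-$N$ part is $x^N S^{-N}r_N(S-1)$. This is a nonzero Laurent polynomial in $S$ times $x^N$. The terms with $j<N$ contribute only $x$-degree $<N$. Hence $T$ has a nonzero coefficient of $x^N$ in the basis $\{x^iS^k\}$, so $T\neq0$.

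This gives injectivity. Existence of the normal form comes from surjectivity, because any product of generators can be reordered using $rX=Xr+r'$ to move $X$ to the left.
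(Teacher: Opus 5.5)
Your proof is correct, but it reaches the isomorphism by a different route from the paper. The paper never argues injectivity directly. It writes down the inverse assignment $S\mapsto D+1$, $x\mapsto X(D+1)$ from $\algA$ into $R[X;-\partial_D]$ and checks only that the single defining relation $Sx=(x+1)S$ holds there, which is immediate from $[D,X]=1$. It then observes that the two maps are mutually inverse on generators, and imports uniqueness of the normal form from the PBW basis of an Ore extension. You instead build only $\Phi$, get surjectivity from $x=X(1+D)$, and prove injectivity by a leading-term argument using $X^j=\fall{x}{j}S^{-j}$ and the $\K$-basis $\{x^iS^k\}$ of $\algA$.

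The trade-off is in the inputs. The paper needs only the presentation of $\algA$ together with the Ore-extension PBW theorem. You need the basis $\{x^iS^k\}$ of $\algA$. The relevant fact from Lemma~\ref{lem:faithful} is really the Vandermonde linear independence of the operators $x^iS^k$ established in its proof, not faithfulness as such. There is no circularity, since that proof does not use the present proposition.

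In return, your argument proves uniqueness of the normal form directly inside $\algA$ and needs only the spanning half of the Ore normal form. It also makes explicit that the $X$-degree of a normal form equals the top $x$-degree of its shift coefficients. The paper uses this dictionary implicitly later, e.g.\ $X^2D=x(x-1)(S^{-1}-S^{-2})$ in the proof of Theorem~\ref{thm:support}. Finally, your explicit check of $[S^{-1},X]=-S^{-2}$ and the agreement-of-derivations step fill in what the paper compresses into ``first for polynomials and then for all $r\in R$.''
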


\begin{proof}
The defining relation gives
\[
 DX=(S-1)xS^{-1}=(x+1)-xS^{-1},
 \qquad
 XD=xS^{-1}(S-1)=x-xS^{-1},
\]
so $[D,X]=1$.  Equivalently, $[X,D]=-1$, and hence first for polynomials and then for all $r\in R$,
\[
 r(D)X=Xr(D)+r'(D),
 \qquad
 Xr(D)=r(D)X-r'(D).
\]
Conversely, in the Ore extension set $S=D+1$ and $x=X(D+1)$.  Then
\[
 Sx=(D+1)X(D+1)=(X(D+1)+1)(D+1)=(x+1)S,
\]
so the defining relation of $\algA$ holds.  The assignments
\[
 D=S-1,
 \quad X=xS^{-1},
 \qquad
 S=D+1,
 \quad x=X(D+1)
\]
are inverse homomorphisms.  The normal form is the standard PBW normal form for an Ore extension; see \cite{Ore1933,McConnellRobson2001}.
\end{proof}

\begin{lemma}\label{lem:faithful}
The natural action of $\algA$ on $\calP=\K[x]$ is faithful.  The locally finite action of $\K[[D]]$ on $\calP$ is faithful as well.
\end{lemma}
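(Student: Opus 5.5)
I will prove faithfulness of the $\algA$-action directly from the lattice form $T=\sum_{j}t_j(x)S^j$, rather than through the Ore normal form. Suppose $T=\sum_{j=a}^{b}t_j(x)S^j$ annihilates every polynomial. The plan is to test $T$ on a family of polynomials rich enough to separate the shifts. For each $f\in\calP$ we have
\[
(Tf)(x)=\sum_{j=a}^{b}t_j(x)f(x+j)=0
\]
as a polynomial identity in $x$.

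First I pass to an extension of scalars so that points can be evaluated: I view the identity in $\K[x]$ and specialize $x$ to elements of $\K$. Since $\K$ has characteristic zero, it is infinite.

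The key step is to fix a point $x_0\in\K$ and an index $k\in\{a,\dots,b\}$. By Lagrange interpolation at the finitely many distinct points $x_0+a,\dots,x_0+b$, there is a polynomial $f$ with $f(x_0+k)=1$ and $f(x_0+j)=0$ for $j\ne k$. Distinctness holds because the integers $a,\dots,b$ remain distinct in characteristic zero. Evaluating the identity at $x_0$ gives $t_k(x_0)=0$. Since $x_0$ ranges over the infinite field $\K$, each $t_k$ vanishes identically, so $T=0$. Combined with Proposition~\ref{prop:ore}, this also shows that the Ore normal form injects into $\operatorname{End}(\calP)$.

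For $\K[[D]]$, I first note that $D$ lowers degree by exactly one. Hence for $\phi=\sum_{m\ge0}c_mD^m$ and $f$ of degree $n$, the series $\phi f$ is a finite sum, which is the local finiteness. Now suppose $\phi\neq0$, and let $m_0$ be the least index with $c_{m_0}\ne0$. I apply $\phi$ to the binomial polynomial $\binom{x}{m_0}$ (or to $x^{m_0}$). Using $D\binom{x}{m}=\binom{x}{m-1}$, every term with $m>m_0$ kills $\binom{x}{m_0}$, and terms with $m<m_0$ are absent. This leaves $\phi\binom{x}{m_0}=c_{m_0}\neq0$. Thus the action is faithful.

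There is no serious obstacle here. The only point needing care is the use of characteristic zero, which guarantees both that $\K$ is infinite and that the shifted points $x_0+j$ are distinct. Both facts enter through the interpolation step.
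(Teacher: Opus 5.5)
Your proof is correct. For the first assertion it takes a different route from the paper; for the second it is essentially the paper's argument.

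\textbf{Faithfulness of $\algA$.} The paper applies $T$ coefficientwise to the formal exponential $e^{tx}$ and cancels $e^{tx}$. This gives $\sum_j t_j(x)e^{jt}=0$ in $\K[x][[t]]$. Reading off the coefficients of $t^m$ for $0\le m\le b-a$ yields a Vandermonde system $\sum_j j^m t_j(x)=0$. Its matrix is invertible because the integers $a,\dots,b$ are distinct in characteristic zero, so every $t_j$ vanishes at once as a polynomial, with no evaluation at points.

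You instead separate the shifts pointwise. For each $x_0\in\K$, a Lagrange interpolant (depending on $x_0$) isolates $t_k(x_0)$. You then need the separate fact that $\K$ is infinite to pass from pointwise vanishing to $t_k=0$.

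Each approach buys something different:
\begin{itemize}
\item Yours is more elementary and stays inside $\calP$, with no auxiliary power series. The cost is an $x_0$-dependent family of test polynomials and infinitude of $\K$ as an extra ingredient.
\item The paper's works directly with the coefficients in $\K[x]$ and needs only the nonvanishing Vandermonde determinant. In effect it tests $T$ on the fixed family $x^m$, $m\le b-a$.
\end{itemize}
Both arguments show that $T$ is already determined by its action on polynomials of degree at most $b-a$. Your remark about ``extension of scalars'' is unnecessary, since you only evaluate at points of $\K$ itself.

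\textbf{Faithfulness of $\K[[D]]$.} Your argument matches the paper's up to normalization. You test on $\binom{x}{m_0}$ and get $c_{m_0}$; the paper tests on $x^{\underline{m_0}}=m_0!\binom{x}{m_0}$ and gets $a_{m_0}m_0!$.
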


\begin{proof}
Let $T=\sum_{j=a}^{b}t_j(x)S^j\in\algA$ annihilate every polynomial.  Applying $T$ coefficientwise to the formal exponential $e^{tx}$ gives
\[
 0=T(e^{tx})=e^{tx}\sum_{j=a}^{b}t_j(x)e^{jt}
 \quad\text{in }\K[x][[t]].
\]
Taking the first $b-a+1$ derivatives with respect to $t$ at $t=0$ yields a Vandermonde system for the finitely many polynomials $t_j(x)$; hence every $t_j$ is zero.  Thus the action of $\algA$ is faithful.

If $A(D)=\sum_{k\ge0}a_kD^k$ acts as zero on $\calP$, let $m$ be the least index with $a_m\ne0$.  Applying $A(D)$ to $x^{\underline m}$ gives the nonzero constant $a_m m!$, a contradiction.  Hence the locally finite $\K[[D]]$-action is faithful.
\end{proof}

We use the convention that an $\mathfrak{sl}_2$-triple $(E,H,F)$ satisfies
\begin{equation}\label{eq:sl2-relations}
 [H,E]=2E,
 \qquad
 [H,F]=-2F,
 \qquad
 [E,F]=H.
\end{equation}

\begin{lemma}\label{lem:evaluation}
The evaluation homomorphism
\[
 \ev_0:R=\K[D,(D+1)^{-1}]\longrightarrow\K,
 \qquad
 r\longmapsto r(0),
\]
has kernel $DR$.
\end{lemma}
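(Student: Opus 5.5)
The plan is to check directly that $\ev_0$ is a well-defined ring homomorphism, and then to identify its kernel by a factor-theorem argument in $R$.

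\emph{Well-definedness.} By the description of $R$ as the subring of $\K(D)$ whose only possible pole is at $D=-1$, every $r\in R$ can be written as $r=p(D)/(D+1)^k$ with $p\in\K[D]$ and $k\ge0$. Set
\[
 r(0)=p(0).
\]
This value does not depend on the representative, since $p/(D+1)^k=q/(D+1)^l$ forces $p(D+1)^l=q(D+1)^k$, and evaluating at $D=0$ gives $p(0)=q(0)$. Equivalently, $\ev_0$ is the restriction of evaluation at $0$ on the local ring of $\K(D)$ at $D=0$, which contains $R$. It is therefore a ring homomorphism, and it is surjective because it fixes constants.

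\emph{The inclusion $DR\subseteq\ker\ev_0$.} This is immediate: $(Dr)(0)=0\cdot r(0)=0$.

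\emph{The inclusion $\ker\ev_0\subseteq DR$.} Suppose $r=p/(D+1)^k$ satisfies $r(0)=0$. Then $p(0)=0$, so by the factor theorem in $\K[D]$ we can write $p=Dq$ with $q\in\K[D]$. Hence
\[
 r=D\cdot\frac{q}{(D+1)^k}\in DR .
\]

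The only point needing any care is well-definedness, that is, independence of the representative. The cross-multiplication argument above, or equivalently the identification of $R$ with $\K[S,S^{-1}]$ and evaluation at $S=1$, settles it. As a cross-check in the $S$-coordinate: $\ev_0$ is $\sum c_jS^j\mapsto\sum c_j$, and its kernel is $(S-1)\K[S,S^{-1}]=DR$.
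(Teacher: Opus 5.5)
Your proof is correct and follows essentially the same route as the paper: write $r=a(D)/(D+1)^m$, observe that the denominator equals $1$ at $D=0$ so that $r(0)=a(0)$, and apply the factor theorem to get $a=Db$ and hence $r\in DR$. Your well-definedness check and the explicit inclusion $DR\subseteq\ker\ev_0$ are details the paper leaves implicit.
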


\begin{proof}
Write $r(D)=a(D)/(D+1)^m$ with $a(D)\in\K[D]$ and $m\ge0$.  Since the denominator has value $1$ at $D=0$, one has $r(0)=0$ if and only if $a(0)=0$.  This is equivalent to $a(D)=Db(D)$ for some $b(D)\in\K[D]$, and hence to $r(D)\in DR$.
\end{proof}

\begin{theorem}\label{thm:classification}
Let $E,H\in\algA$ and fix $F=-D=1-S$.  Then $(E,H,F)$ is an $\mathfrak{sl}_2$-triple if and only if there exist unique $\lambda\in\K$ and $g(D)\in R$ such that, with
\[
 Y=X+g(D),
\]
one has
\begin{equation}\label{eq:normal-form}
 H=2YD-\lambda,
 \qquad
 E=Y^2D-\lambda Y.
\end{equation}
For every $g\in R$, the assignment
\begin{equation}\label{eq:shear-automorphism}
 \sigma_g(D)=D,
 \qquad
 \sigma_g(X)=X+g(D)
\end{equation}
defines a $\K$-algebra automorphism of $\algA$ with inverse $\sigma_{-g}$.  If
\[
 H^{(0)}_\lambda=2XD-\lambda,
 \qquad
 E^{(0)}_\lambda=X^2D-\lambda X,
\]
then every fixed-lowering triple is uniquely
\[
 (E,H,F)=\bigl(\sigma_g(E^{(0)}_\lambda),
 \sigma_g(H^{(0)}_\lambda),-D\bigr).
\]
\end{theorem}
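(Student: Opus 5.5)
The proof has three parts: the automorphism claim, sufficiency, and necessity with uniqueness. The automorphism claim is routine. By Proposition~\ref{prop:ore}, $\algA=R[X;-\partial_D]$, so a homomorphism out of $\algA$ is determined by images of $R$ and $X$ that respect $Xr=rX-r'$. Under $\sigma_g$ we have $[\sigma_g(D),\sigma_g(X)]=[D,X+g(D)]=[D,X]=1$, since $g(D)$ commutes with $D$. More generally $(X+g)r-r(X+g)=-r'$ for every $r\in R$, so $\sigma_g$ is well defined. The composite $\sigma_{-g}\sigma_g$ fixes $D$ and $X$, so it is the identity.

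Sufficiency is then immediate. One computes once that $(E^{(0)}_\lambda,H^{(0)}_\lambda,-D)$ is an $\mathfrak{sl}_2$-triple, using only $[D,X]=1$:
\begin{itemize}
\item $[XD,D]=-D$, which gives $[H^{(0)},-D]=2D=-2F$;
\item $[X^2D-\lambda X,-D]=2XD-\lambda$, which gives $[E,F]=H$;
\item $[H^{(0)},E^{(0)}]=2E^{(0)}$ follows from $\ad(XD)$ acting on $X^kD$ with eigenvalue $k-1$.
\end{itemize}
Applying $\sigma_g$, which fixes $F=-D$, yields \eqref{eq:normal-form}.

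For necessity and uniqueness, suppose $(E,H,-D)$ is a triple. The relation $[H,D]=-2D$ says that $H_0:=2XD$ satisfies the same relation, so $K=H-2XD$ commutes with $D$. Write $K=\sum_j X^j r_j$ in normal form. Since $[D,X^jr_j]=jX^{j-1}r_j$, the centralizer of $D$ in $\algA$ is exactly $R$, so $K=k(D)\in R$. Next we need $K=2gD-\lambda$. By Lemma~\ref{lem:evaluation}, $k-k(0)\in DR$, so $k=2g(D)D-\lambda$ with $\lambda=-k(0)$ and $g=(k+\lambda)/(2D)\in R$. Both are uniquely determined by $H$, because $R$ is a domain.

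Thus $H=2(X+g)D-\lambda=\sigma_g(H^{(0)}_\lambda)$, and applying $\sigma_{-g}$ reduces to the case $g=0$. For $E$, the relation $[E,-D]=H$ says that $E'=E-E^{(0)}_\lambda$ commutes with $D$, so $E'=e(D)\in R$. The remaining condition $[H^{(0)},E]=2E$ then forces $[2XD,e]=2e$. In the Ore extension, $[XD,e]=-e'D$, so $-2De'=2e$, that is, $De'=-e$. Writing $e$ as a Laurent polynomial in $D$ at the point $D=0$, or equivalently expanding $e$ in $\K[[D]]$, where $R$ embeds, the equation $De'=-e$ says each coefficient satisfies $(k+1)a_k=0$ for $k\ge0$. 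Hence $e=0$.

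So $E$ is as claimed, and uniqueness of $(g,\lambda)$ follows from their reconstruction from $H$. I expect the main care to lie in the two centralizer computations: showing that the centralizer of $D$ is exactly $R$ via the PBW normal form, and justifying the power-series argument that kills $e$. Both use only Proposition~\ref{prop:ore}, and Lemma~\ref{lem:faithful} is available if one prefers to argue through the action on $\calP$.
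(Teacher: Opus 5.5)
Your proof is correct, and it handles the relation $[H,E]=2E$ by a different route from the paper. The first step is shared. In both arguments, $[H,D]=-2D$ and the normal form of Proposition~\ref{prop:ore} give $H=2XD+\phi(D)$ with $\phi\in R$, and Lemma~\ref{lem:evaluation} splits $\phi=2Dg-\lambda$.

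After that the two arguments diverge.
\begin{itemize}
\item The paper keeps the general form $E=X^2D+X\phi+\psi$ and computes $[H,E]$ in normal order. This yields the nonlinear equation $2D\psi'+2\psi=D\phi''+\phi\phi'$. It then checks that both sides are derivatives of elements of $R$, integrates, fixes the constant by evaluating at $D=0$, and solves for $\psi=Dg'+Dg^2-\lambda g$. The shear $\sigma_g$ appears only at the end, as a repackaging of the result.
\item You establish the automorphism $\sigma_g$ first and use it as the main tool. Pulling back by $\sigma_{-g}$, which fixes $F=-D$, moves $H$ to $H^{(0)}_\lambda$. The ambiguity in $E$ then lies in the centralizer $R$ of $D$. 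The Cartan relation becomes the linear homogeneous equation $De'=-e$, whose only solution in $R\subset\K[[D]]$ is $0$.
\end{itemize}

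In effect you show that $E$ is forced by $(H,F)$, because $\ad H$ has no eigenvalue $2$ on the centralizer of $F$. This removes the integration step entirely and makes the uniqueness of $E$ conceptually transparent. The paper's computation, by contrast, is self-contained in normal form. It produces the coefficient $\psi$ of $E$ directly from the defining relations, without first needing to know that $\sigma_g$ is an automorphism. Your route is shorter; the paper's is more explicit.

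One small slip in your sufficiency check. The term $-\lambda X$ of $E^{(0)}_\lambda$ is not of the form $X^kD$, so you also need $[XD,X]=X$. More generally, $\ad(XD)$ acts on $X^kD^m$ by $k-m$. This is immediate, so the argument is unaffected.
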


\begin{proof}
Suppose $(E,H,F)$ satisfies \eqref{eq:sl2-relations}.  Write
\[
 H=\sum_{j=0}^{N}X^j h_j(D)
\]
in the normal form of Proposition~\ref{prop:ore}.  Since $F=-D$, the relation $[H,F]=-2F$ is equivalent to $[H,D]=-2D$.  Because $[X^j,D]=-jX^{j-1}$, comparison of normal forms gives
\[
 h_1(D)=2D,
 \qquad
 h_j(D)=0\quad(j\ge2).
\]
Thus $H=2XD+\phi(D)$ for a unique $\phi\in R$. Similarly, writing $E=\sum_{j=0}^{M}X^j e_j(D)$, the relation $[E,F]=H$, or $-[E,D]=H$, forces
\[
 E=X^2D+X\phi(D)+\psi(D)
\]
for a unique $\psi\in R$.  For readability, we record the normal-order calculation.  The identities
\[
 [XD,X]=X,
 \qquad
 [XD,D]=-D,
 \qquad
 [\phi(D),X]=\phi'(D),
 \qquad
 [XD,r(D)]=-Dr'(D)
\]
give
\[
\begin{aligned}
 [2XD,X^2D]&=2X^2D,\\
 [2XD,X\phi]&=2X\phi-2XD\phi',\\
 [2XD,\psi]&=-2D\psi',\\
 [\phi,X^2D]&=2X\phi'D+D\phi'',\\
 [\phi,X\phi]&=\phi\phi'.
\end{aligned}
\]
Since $D$ commutes with $\phi'$, the two terms containing $X D\phi'$ cancel.  Hence
\[
 [H,E]=2X^2D+2X\phi-2D\psi'+D\phi''+\phi\phi'.
\]
Therefore, $[H,E]=2E$ is equivalent to
\begin{equation}\label{eq:psi-ode}
 2D\psi'(D)+2\psi(D)=D\phi''(D)+\phi(D)\phi'(D).
\end{equation}
The integration step is internal to $R$.  Indeed
\[
 (2D\psi)'=2D\psi'+2\psi,
\]
whereas
\[
 \left(D\phi'-\phi+\frac12\phi^2\right)'
 =D\phi''+\phi\phi'.
\]
Thus the two sides of \eqref{eq:psi-ode} are total derivatives.  Because $R\subset\K(D)$ and $\K$ has characteristic zero, the kernel of $\partial_D:R\to R$ is exactly $\K$.  Hence
\[
 2D\psi=D\phi'-\phi+\frac12\phi^2+C
\]
for some $C\in\K$.  Evaluating at $D=0$ gives
\[
 C=\phi(0)-\frac12\phi(0)^2.
\]
Set
\begin{equation}\label{eq:recover-lambda-g}
 \lambda=-\phi(0),
 \qquad
 g(D)=\frac{\phi(D)-\phi(0)}{2D}.
\end{equation}
By Lemma~\ref{lem:evaluation}, the numerator in the definition of $g$ lies in $DR$, so $g\in R$.  Thus
\[
 \phi(D)=2Dg(D)-\lambda.
\]
Substituting this identity into the integrated equation gives, without division by any element outside $R$
\[
 2D\psi=2D^2g'+2D^2g^2-2\lambda Dg,
\]
and hence
\[
 \psi(D)=Dg'(D)+Dg(D)^2-\lambda g(D).
\]
Since $g(D)$ commutes with $D$,
\[
 (X+g)^2D-\lambda(X+g)
 =X^2D+X(2Dg-\lambda)+Dg'+Dg^2-\lambda g=E,
\]
and $H=2(X+g)D-\lambda$.  The formulas \eqref{eq:recover-lambda-g} show uniqueness. Conversely, if $Y=X+g(D)$, then $[D,Y]=1$, so $[YD,Y]=Y$ and $[YD,D]=-D$.  Therefore
\[
 [2YD,Y^2D]=2Y^2D,
 \qquad
 [2YD,-\lambda Y]=-2\lambda Y,
\]
and hence
\[
 [2YD-\lambda,Y^2D-\lambda Y]=2(Y^2D-\lambda Y).
\]
Similarly
\[
 [2YD-\lambda,-D]=-2(-D),
 \qquad
 [Y^2D-\lambda Y,-D]=2YD-\lambda.
\]
Thus \eqref{eq:normal-form} defines an $\mathfrak{sl}_2$-triple.  Finally, the images in \eqref{eq:shear-automorphism} satisfy
\[
 [\sigma_g(D),\sigma_g(X)]=[D,X+g(D)]=1,
\]
so Proposition~\ref{prop:ore} gives an endomorphism $\sigma_g$ of $\algA$.  The composition laws $\sigma_g\sigma_h=\sigma_{g+h}$ and $\sigma_0=\id$ show that its inverse is $\sigma_{-g}$, and applying $\sigma_g$ to the standard triple gives \eqref{eq:normal-form}.
\end{proof}

\begin{remark}\label{rem:relative-category}
Theorem~\ref{thm:classification} classifies the fiber over the actual operator $F=1-S$.  No quotient by automorphisms is taken.  This is why the Laurent support of $H$ and $E$ remains meaningful.  The automorphisms $\sigma_g$ describe the fiber, but applying an arbitrary automorphism of a larger Weyl algebra would generally change the prescribed lowering operator and erase the stencil data retained below.
\end{remark}

\begin{remark}\label{rem:lambda}
The scalar $\lambda$ is independent of the stencil and recurrence parameters studied below.  Once $g$ is fixed, changing $\lambda$ shifts the $H$-eigenvalues and changes the raising action of $E$, hence the $\mathfrak{sl}_2$-module parameter, but it does not change the conjugating operator, polynomial eigenbasis, generating function, recurrence, stencil bounds, or orthogonality locus.
\end{remark}

\section{Shift support and endpoint rigidity}

For a nonzero operator
\[
 T=\sum_{j=a}^{b}t_j(x)S^j,
 \qquad
 t_a,t_b\ne0,
\]
define
\[
 \supp_S(T)=\{j\in\Z:t_j\ne0\},
 \qquad
 \nu_-(T)=a,
 \qquad
 \nu_+(T)=b.
\]

\begin{theorem}\label{thm:support}
Let $(E,H,F)$ be associated with $(g,\lambda)$ as in Theorem~\ref{thm:classification}.  If $g=0$, then
\[
 \nu_-(H)=-1,
 \quad \nu_+(H)=0,
 \qquad
 \nu_-(E)=-2,
 \quad \nu_+(E)=-1.
\]
Suppose $g\ne0$ and write
\begin{equation}\label{eq:g-pq}
 g(S)=\sum_{j=p}^{q}c_jS^j,
 \qquad
 c_pc_q\ne0.
\end{equation}
Then
\begin{equation}\label{eq:H-endpoints}
 \nu_-(H)=\min\{-1,p\},
 \qquad
 \nu_+(H)=\max\{0,q+1\},
\end{equation}
and
\begin{equation}\label{eq:E-endpoints}
 \nu_-(E)=\min\{-2,2p\},
 \qquad
 \nu_+(E)=\max\{-1,2q+1\}.
\end{equation}
In particular, these endpoints are independent of $\lambda$ and there are no exceptional coefficient loci on which an extreme shift disappears.
\end{theorem}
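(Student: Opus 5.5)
Rewrite everything in the $x,S$ coordinates and read off the coefficient of each shift directly. Since $X=xS^{-1}$ and $D=S-1$, we have $XD=x(1-S^{-1})$. Also $g(D)D=g(S)(S-1)$ has constant coefficients. Hence
\[
 H=2x(1-S^{-1})+2(S-1)g(S)-\lambda .
\]
For $E$, expand $Y^2D=(X+g)^2D=X^2D+XgD+gXD+g^2D$ and move every $S$-power to the right of $x$. We have $X^2=xS^{-1}xS^{-1}=x(x-1)S^{-2}$, so
\[
 X^2D=x(x-1)(S^{-1}-S^{-2}).
\]
Further, $XgD=x\,g(S)(1-S^{-1})$, and $gXD=g(S)\,x(1-S^{-1})=\sum_j c_j(x+j)S^j(1-S^{-1})$. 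The remaining pieces $g^2D$, $\lambda X=\lambda xS^{-1}$ and $\lambda g$ are explicit. The coefficients $t_j(x)$ of $S^j$ are then polynomials in $x$. The task is to find, at each candidate endpoint, a part of $t_j$ that cannot cancel.

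For $H$ the argument is immediate. The $x$-dependent part $2x(1-S^{-1})$ is supported on $\{-1,0\}$. The constant part $2(S-1)g(S)$ has exact support endpoints $p$ and $q+1$, with coefficients $-2c_p$ and $2c_q$. An $x$-term and a constant term can never cancel, since they differ in degree in $x$. So at $j=-1$ the coefficient $-2x$ survives, and at $j=p$ the constant $-2c_p$ survives. This gives $\nu_-(H)=\min\{-1,p\}$, and symmetrically $\nu_+(H)=\max\{0,q+1\}$. The case $g=0$ gives $\{-1,0\}$. The scalar $\lambda$ only touches $S^0$, and $0$ always lies inside $[\nu_-(H),\nu_+(H)]$, so it is never an endpoint issue.

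For $E$, organize the terms by degree in $x$.
\begin{itemize}
\item Degree $2$: only $x^2(S^{-1}-S^{-2})$, supported on $\{-2,-1\}$.
\item Degree $1$: $x\bigl(2g(S)(1-S^{-1})-\lambda S^{-1}\bigr)$ plus lower-order corrections. The $x$-linear part of the coefficient of $S^j$ is $2(c_j-c_{j+1})$, up to $\lambda$ at $j=-1$, so it has extreme exponents $p-1$ and $q$.
\item Degree $0$: $g^2D$ together with $\sum_j jc_jS^j(1-S^{-1})$ and $-\lambda g$. The $g^2D=g^2(S-1)$ part has extreme exponents $2p$ and $2q+1$, with coefficients $-c_p^2$ and $c_q^2$, both nonzero.
\end{itemize}

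Now compare the extreme exponents. On the right, the candidates are $-1$ (degree $2$, coefficient $x^2$, hence nonvanishing), $q$ (degree $1$) and $2q+1$ (degree $0$). If $2q+1>-1$, i.e.\ $q\ge 0$, then $2q+1>q$ as well. The degree-$0$ coefficient at $2q+1$ must then be checked to be exactly $c_q^2$, since $g^2D$ is the only term reaching that exponent: $jc_jS^j(1-S^{-1})$ reaches only $q$, and $\lambda g$ reaches only $q$. If $q\le -1$, the degree-$2$ term $x^2S^{-1}$ sits at $-1\ge q$ and cannot be cancelled by lower-degree terms. So $\nu_+(E)=\max\{-1,2q+1\}$. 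On the left, the candidates are $-2$ (coefficient $-x^2$), $p-1$ (degree $1$) and $2p$ (degree $0$, coefficient $-c_p^2$ from $g^2(S-1)$). If $p\le -2$, then $2p<p-1$. The other degree-$0$ contributions bottom out at $p-1>2p$, and $\lambda g$ bottoms out at $p$, so $-c_p^2$ survives and $\nu_-(E)=2p\le -4<-2$. If $p\ge -1$, then $2p\ge -2$ and $p-1\ge -2$. In that case the degree-$2$ coefficient $-x(x-1)$ at $S^{-2}$ gives $\nu_-(E)=-2$, because nothing lies strictly below $-2$. This matches $\min\{-2,2p\}$; when $p=-1$ the two candidates tie at $-2$, and the top-degree $x^2$ term still survives.

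The main obstacle is the bookkeeping in the $E$ computation. One must verify carefully that at the extreme exponent $2p$ or $2q+1$ no other degree-$0$ term, including those from normal-ordering $gX$ and from $\lambda g$, reaches that far out. One must also handle the borderline ties $p=-1$ and $q=-1$ via the $x$-degree argument. Separating the terms by degree in $x$ is what removes any need to worry about exceptional coefficient loci, and it is also what makes independence from $\lambda$ automatic: $\lambda$ enters only through $-\lambda xS^{-1}$ and $-\lambda g$, neither of which reaches a strict extreme exponent.
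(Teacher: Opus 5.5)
Your proposal is correct and is essentially the paper's proof: you expand $H$ and $E$ in $(x,S)$ coordinates, note that away from the ties $p=-1$ and $q=-1$ each extreme shift has a unique source (the standard part or the $g$-part, e.g.\ $g^2D$ at $2p$ and $2q+1$), and exclude cancellation at the ties by the $x$-degree of the coefficient. The only difference is that the paper computes the tie coefficients of $E$ explicitly as $-(x+c_{-1})(x+c_{-1}-1)$ and $(x+c_{-1})(x+c_{-1}-1-\lambda)$, while your grading by $x$-degree reads off the surviving $\pm x^2$ term directly; your degree-one list omits the $-x(S^{-1}-S^{-2})$ piece of $X^2D$, but that piece sits at shifts $-2,-1$, where the quadratic term already dominates, so nothing changes.
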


\begin{proof}
Since $D=S-1$ and $X=xS^{-1}$,
\[
 XD=x-xS^{-1},
\]
so
\begin{equation}\label{eq:H-shift-form}
 H=2x-2xS^{-1}+2g(S)(S-1)-\lambda.
\end{equation}
The constant-coefficient operator $g(S)(S-1)$ has lowest shift $p$, with coefficient $-c_p$, and highest shift $q+1$, with coefficient $c_q$.  If either endpoint coincides with $-1$ or $0$, the $x$-dependent coefficient from $2x-2xS^{-1}$ prevents complete cancellation.  This proves \eqref{eq:H-endpoints}. For $E$ use
\begin{equation}\label{eq:E-expand}
 E=X^2D+2XgD+g'D+g^2D-\lambda X-\lambda g.
\end{equation}
The standard term is
\[
 X^2D=x(x-1)(S^{-1}-S^{-2}),
\]
with extreme shifts $-2$ and $-1$.  The term $g^2D$ has lowest shift $2p$, with coefficient $-c_p^2$, and highest shift $2q+1$, with coefficient $c_q^2$.  The terms $XgD$ and $g'D$ are supported between $p-1$ and $q$, while $\lambda g$ is supported between $p$ and $q$. Away from the boundary values $p=-1$ and $q=-1$, the source of each extreme shift is unique
\[
\begin{array}{c|c@{\qquad}c|c}
 p<-1 & \text{lowest shift }2p\text{ from }g^2D
 & p>-1 & \text{lowest shift }-2\text{ from }X^2D\\
 q>-1 & \text{highest shift }2q+1\text{ from }g^2D
 & q<-1 & \text{highest shift }-1\text{ from }X^2D.
\end{array}
\]
The remaining terms in \eqref{eq:E-expand} lie strictly between the displayed endpoints in these cases.  It remains only to exclude cancellation at the two boundary values.  If $p=-1$, the coefficient at shift $-2$ is
\[
 -x(x-1)-2c_{-1}x+c_{-1}-c_{-1}^2
 =-(x+c_{-1})(x+c_{-1}-1),
\]
a nonzero quadratic polynomial.  If $q=-1$, the coefficient at shift $-1$ is
\[
 x(x-1)+2c_{-1}x-c_{-1}+c_{-1}^2-\lambda x-\lambda c_{-1}
 =(x+c_{-1})(x+c_{-1}-1-\lambda),
\]
again nonzero.  The two boundary coefficients are therefore nonzero, and the uniqueness discussion above proves \eqref{eq:E-endpoints}.  This also isolates the rigidity mechanism; away from a boundary an extreme shift has a unique algebraic source, while at a boundary its coefficient has a nonzero highest-degree term in $x$ and cannot be canceled by constant-coefficient contributions.
\end{proof}

\begin{definition}\label{def:layers}
For $m\ge0$, a fixed-lowering triple belongs to the $m$th one-sided stencil layer $\calL_m$ if
\begin{equation}\label{eq:layer}
 \supp_S(H)\subseteq[-1,m]\cap\Z,
 \qquad
 \supp_S(E)\subseteq[-2,2m-1]\cap\Z.
\end{equation}
The word \emph{layer} refers only to these nested support bounds; no quotient or geometric stratification is intended.
\end{definition}

\begin{corollary}\label{cor:all-layers}
For every $m\ge0$, a fixed-lowering triple belongs to $\calL_m$ if and only if
\begin{equation}\label{eq:layer-g-space}
 g(S)\in \Span_{\K}\{S^{-1},1,S,\ldots,S^{m-1}\},
\end{equation}
where for $m=0$ the right-hand side means $\K S^{-1}$.
\end{corollary}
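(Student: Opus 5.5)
The plan is to read Corollary~\ref{cor:all-layers} off directly from the exact endpoint formulas of Theorem~\ref{thm:support}. Support is contained in an integer interval exactly when both extreme shifts lie in it, so the layer condition \eqref{eq:layer} amounts to four inequalities: $\nu_-(H)\ge -1$, $\nu_+(H)\le m$, $\nu_-(E)\ge -2$ and $\nu_+(E)\le 2m-1$. The work is to translate each one into a condition on the exponents $p,q$ of $g$ in \eqref{eq:g-pq}.

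First dispose of $g=0$. Theorem~\ref{thm:support} gives $\supp_S(H)\subseteq[-1,0]$ and $\supp_S(E)\subseteq[-2,-1]$. These lie in $[-1,m]$ and $[-2,2m-1]$ for every $m\ge0$, since $2m-1\ge -1$. Also $0$ belongs to every span in \eqref{eq:layer-g-space}, so both sides of the equivalence hold.

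Now take $g\ne0$ with $g=\sum_{j=p}^q c_jS^j$ and $c_pc_q\ne0$. By \eqref{eq:H-endpoints}, $\min\{-1,p\}\ge-1$ is equivalent to $p\ge-1$, and $\max\{0,q+1\}\le m$ is equivalent to $q\le m-1$ (using $m\ge0$). By \eqref{eq:E-endpoints}, $\min\{-2,2p\}\ge-2$ is again equivalent to $p\ge-1$, and $\max\{-1,2q+1\}\le 2m-1$ is again equivalent to $q\le m-1$. Hence the $E$-conditions are implied by the $H$-conditions, and membership in $\calL_m$ is equivalent to $-1\le p\le q\le m-1$. That says exactly that $g$ lies in $\Span_\K\{S^{-1},\dots,S^{m-1}\}$. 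For $m=0$ the condition forces $p=q=-1$, which gives $\K S^{-1}$, matching the stated convention.

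There is no real obstacle here. The only point needing care is that the equivalences use the exactness of the endpoints in Theorem~\ref{thm:support}, with no cancellation loci. This is what makes the \emph{only if} direction valid: a term $c_pS^p$ with $p<-1$, or $c_qS^q$ with $q\ge m$, genuinely produces a shift outside the allowed window.
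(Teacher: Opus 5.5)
Your proposal is correct and follows the paper's proof: both use the exact endpoint formulas of Theorem~\ref{thm:support} to reduce membership in $\calL_m$ to $p\ge-1$ (from the lower support bounds) and $q\le m-1$ (from the upper support bounds), with $g=0$ handled separately. Your explicit use of $m\ge0$ in the $\max/\min$ inequalities and your check of the $m=0$ convention fill in details the paper leaves implicit.
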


\begin{proof}
The case $g=0$ is immediate.  For $g\ne0$, Theorem~\ref{thm:support} shows that the lower support bounds in \eqref{eq:layer} are equivalent to $p\ge-1$, while the upper support bounds are equivalent to $q\le m-1$.  Together these conditions are precisely \eqref{eq:layer-g-space}.
\end{proof}

\begin{corollary}\label{cor:minimal-layer}
A fixed-lowering triple belongs to $\calL_0$ if and only if
\[
 g(S)=cS^{-1}
\]
for some $c\in\K$.  In that case
\[
 X+g(S)=(x+c)S^{-1},
\]
so the triple is obtained from the standard realization by the lattice translation $x\mapsto x+c$.
\end{corollary}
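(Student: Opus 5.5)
The plan is to specialize Corollary~\ref{cor:all-layers} to $m=0$ and then compute directly in $\algA$. By Definition~\ref{def:layers}, $\calL_0$ is the layer in which $\supp_S(H)\subseteq[-1,0]$ and $\supp_S(E)\subseteq[-2,-1]$. Corollary~\ref{cor:all-layers} with $m=0$, under the stated convention that the span is $\K S^{-1}$, says that a fixed-lowering triple lies in $\calL_0$ exactly when $g\in\K S^{-1}$, that is, $g(S)=cS^{-1}$ for some $c\in\K$. The value $c=0$ gives $g=0$, which is covered by the $g=0$ case of Theorem~\ref{thm:support}. This settles the equivalence without further work.

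For the displayed identity, note that $g(S)=cS^{-1}$ has constant coefficients, so it commutes with multiplication by constants. Hence
\[
X+g(S)=xS^{-1}+cS^{-1}=(x+c)S^{-1}
\]
holds as an identity in $\algA$.

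To identify the triple with a lattice translation, consider the automorphism $\tau_c$ of $\algA$ defined by $x\mapsto x+c$ and $S\mapsto S$. It is well defined because $S(x+c)=(x+c+1)S$, so the relation $Sx=(x+1)S$ is preserved. Since $\tau_c(D)=D$ and $\tau_c(X)=(x+c)S^{-1}=X+cS^{-1}$, the automorphisms $\tau_c$ and $\sigma_g$ agree on the generators $D,X$ (Proposition~\ref{prop:ore}), so $\tau_c=\sigma_g$. Theorem~\ref{thm:classification} then shows that the triple is $\tau_c$ applied to the standard triple $(E^{(0)}_\lambda,H^{(0)}_\lambda,-D)$, with $F=-D$ fixed.

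Concretely, $\tau_c$ is conjugation by the translation $f(x)\mapsto f(x+c)$ on $\calP$. Since $\algA$ acts faithfully on $\calP$ (Lemma~\ref{lem:faithful}), the automorphism $\tau_c$ is a genuine lattice translation. The main point requiring care is the $m=0$ convention in Corollary~\ref{cor:all-layers}. The reason it is correct is that the upper bound $q\le m-1=-1$ together with the lower bound $p\ge-1$ forces $p=q=-1$.
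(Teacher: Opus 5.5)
Your proposal is correct and follows the paper's proof: the equivalence is Corollary~\ref{cor:all-layers} at $m=0$, and the identity $xS^{-1}+cS^{-1}=(x+c)S^{-1}$ gives the translation interpretation. Your check that the automorphism $x\mapsto x+c$, $S\mapsto S$ agrees with $\sigma_g$ on $D$ and $X$, and acts on $\calP$ as conjugation by $f(x)\mapsto f(x+c)$, spells out what the paper states in one line. The appeal to faithfulness there is harmless but not needed.
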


\begin{proof}
This is Corollary~\ref{cor:all-layers} with $m=0$.  The last identity identifies the shear with translation.
\end{proof}

\begin{corollary}\label{cor:first-layer}
A fixed-lowering triple belongs to $\calL_1$ if and only if
\[
 g(S)=c_{-1}S^{-1}+c_0
\]
for some $c_{-1},c_0\in\K$.  Equivalently
\[
 \supp_S(H)\subseteq\{-1,0,1\},
 \qquad
 \supp_S(E)\subseteq\{-2,-1,0,1\}.
\]
\end{corollary}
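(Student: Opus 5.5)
This is the case $m=1$ of Corollary~\ref{cor:all-layers}, and the plan is to read it off directly. By Definition~\ref{def:layers} with $m=1$, membership in $\calL_1$ means
\[
 \supp_S(H)\subseteq[-1,1]\cap\Z=\{-1,0,1\},
 \qquad
 \supp_S(E)\subseteq[-2,1]\cap\Z=\{-2,-1,0,1\}.
\]
This is exactly the ``equivalently'' clause of the statement, so that clause holds by definition. Corollary~\ref{cor:all-layers} with $m=1$ then says that membership holds if and only if
\[
 g\in\Span_{\K}\{S^{-1},1\},
\]
that is, $g(S)=c_{-1}S^{-1}+c_0$. The case $g=0$ is included by taking $c_{-1}=c_0=0$.

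For a self-contained check, I would also run the argument straight from Theorem~\ref{thm:support}. If $g=0$, the supports are $\{-1,0\}$ and $\{-2,-1\}$, which lie inside the required sets. If $g\ne0$, write $g=\sum_{j=p}^{q}c_jS^j$ as in \eqref{eq:g-pq}.

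\begin{itemize}
\item By \eqref{eq:H-endpoints}, the condition $\nu_-(H)\ge-1$ is equivalent to $p\ge-1$, and $\nu_+(H)\le1$ is equivalent to $q+1\le1$, i.e.\ $q\le0$.
\item By \eqref{eq:E-endpoints}, the condition $\nu_-(E)\ge-2$ is equivalent to $2p\ge-2$, i.e.\ $p\ge-1$, and $\nu_+(E)\le1$ is equivalent to $2q+1\le1$, i.e.\ $q\le0$.
\end{itemize}

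So both support conditions reduce to $-1\le p\le q\le0$. This is precisely $g\in\Span\{S^{-1},1\}$.

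There is no real obstacle here. The only point needing care is that support inclusion is controlled entirely by the two extreme shifts. That is true because support lies between $\nu_-$ and $\nu_+$, and Theorem~\ref{thm:support} guarantees these endpoints are exact, with no cancellation.
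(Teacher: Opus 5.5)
Your proposal is correct and matches the paper's intended argument. The paper gives no separate proof: the statement is the $m=1$ case of Corollary~\ref{cor:all-layers}, and the support clause is Definition~\ref{def:layers} written out. Your direct check via Theorem~\ref{thm:support} simply reruns the proof of Corollary~\ref{cor:all-layers} at $m=1$.
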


\begin{proposition}\label{prop:recognition}
Let $H\in\algA$ and define
\[
 R_H=H-(2x-2xS^{-1}).
\]
There exists $E\in\algA$ such that $(E,H,1-S)$ is an $\mathfrak{sl}_2$-triple if and only if
\[
 R_H\in\K[S,S^{-1}].
\]
Equivalently, recognition and reconstruction consist of the following three steps: (i) subtract the universal $x$-dependent part $2x-2xS^{-1}$ (ii) test whether the remainder has constant coefficients (iii) when it does, evaluate at $S=1$ and divide its vanishing part by $2(S-1)$.  Explicitly, the parameters and the raising operator are recovered uniquely by
\begin{equation}\label{eq:recognition}
 \lambda=-R_H(1),
 \qquad
 g(S)=\frac{R_H(S)-R_H(1)}{2(S-1)},
\end{equation}
\[
 E=\bigl(xS^{-1}+g(S)\bigr)^2(S-1)
 -\lambda\bigl(xS^{-1}+g(S)\bigr).
\]
\end{proposition}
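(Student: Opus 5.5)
The plan is to reduce everything to Theorem~\ref{thm:classification} and to the shift-form identity \eqref{eq:H-shift-form} from the proof of Theorem~\ref{thm:support}.

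\emph{Necessity.} Suppose some $E\in\algA$ makes $(E,H,1-S)$ an $\mathfrak{sl}_2$-triple. Theorem~\ref{thm:classification} then gives unique $(g,\lambda)$ with $H=2(X+g)D-\lambda$. Expanding $XD=x-xS^{-1}$ as in \eqref{eq:H-shift-form} yields
\[
 R_H=2g(S)(S-1)-\lambda\in\K[S,S^{-1}].
\]
Evaluating at $S=1$ gives $R_H(1)=-\lambda$. Here evaluation at $S=1$ is $\ev_0$ in the variable $D=S-1$. Since $S-1$ is a nonzerodivisor in the domain $R$, we may solve for $g$ and get \eqref{eq:recognition}. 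These formulas, together with the uniqueness clause of Theorem~\ref{thm:classification}, show that $\lambda$ and $g$ are determined by $H$. The formula for $E$ is then the normal form \eqref{eq:normal-form}, with $X=xS^{-1}$ and $D=S-1$ substituted. So $E$ is also unique; alternatively, this follows from the uniqueness in the theorem.

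\emph{Sufficiency.} Assume $R_H=\rho(S)\in\K[S,S^{-1}]$. Set $\lambda=-\rho(1)$. By Lemma~\ref{lem:evaluation}, the element $\rho-\rho(1)$ lies in $DR=(S-1)\K[S,S^{-1}]$, so $g=(\rho-\rho(1))/(2(S-1))$ is a genuine Laurent polynomial. Then
\[
 2g(S)(S-1)-\lambda=\rho,
\]
and adding back $2x-2xS^{-1}=2XD$ gives $H=2(X+g)D-\lambda$. The converse half of Theorem~\ref{thm:classification} then shows that $E=(X+g)^2D-\lambda(X+g)$ completes the triple.

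The only point requiring care is interpreting ``$R_H$ has constant coefficients'' inside $\algA$. The representation $\sum_j t_j(x)S^j$ is unique, which follows from the faithfulness in Lemma~\ref{lem:faithful} or from the Ore normal form of Proposition~\ref{prop:ore}. Hence membership in $\K[S,S^{-1}]$ is a well-defined test on the coefficients of $H$, and the two directions above can be compared coefficientwise. I expect no real obstacle: the substance was already handled in Theorem~\ref{thm:classification}, and this proposition is a repackaging of \eqref{eq:recover-lambda-g} in the variable $S$.
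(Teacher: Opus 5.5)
Your proposal is correct and follows the paper's proof essentially step for step. Necessity comes from the shift form $R_H=2g(S)(S-1)-\lambda$ plus evaluation at $S=1$. Sufficiency comes from divisibility of $R_H-R_H(1)$ by $S-1$, which is your appeal to Lemma~\ref{lem:evaluation} and the same fact the paper restates directly in the variable $S$, followed by the converse half of Theorem~\ref{thm:classification}. Your added remark that the constant-coefficient test is well-defined, because the representation $\sum_j t_j(x)S^j$ is unique, is a harmless refinement the paper leaves implicit.
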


\begin{proof}
Equation \eqref{eq:H-shift-form} gives
\[
 R_H=2g(S)(S-1)-\lambda.
\]
Thus $R_H$ is a Laurent polynomial, $R_H(1)=-\lambda$, and subtraction gives \eqref{eq:recognition}.  Conversely, evaluation at $S=1$ defines a homomorphism $\K[S,S^{-1}]\to\K$ with kernel $(S-1)\K[S,S^{-1}]$.  Hence, if $R_H\in\K[S,S^{-1}]$, the difference $R_H(S)-R_H(1)$ is divisible by $S-1$ in the Laurent polynomial ring.  The formulas therefore define $g\in\K[S,S^{-1}]$ and $\lambda\in\K$, and Theorem~\ref{thm:classification} supplies the unique $E$.
\end{proof}

\section{Polynomial eigenbases and classified difference equations}

Let $\calP=\K[x]$ and write
\[
 x^{\underline n}=x(x-1)\cdots(x-n+1),
 \qquad
 x^{\underline0}=1.
\]
Then
\[
 D x^{\underline n}=n x^{\underline{n-1}},
 \qquad
 Xx^{\underline n}=x^{\underline{n+1}}.
\]
For $g\in R$, let $G(D)\in D\K[[D]]$ be the unique formal power series satisfying $G'(D)=g(D)$.  Since $D$ is locally nilpotent on $\calP$, the operator
\[
 U_g=\exp(G(D))
\]
is well defined and invertible on every finite-dimensional polynomial subspace.  All generating-function identities below are identities in $\K[x][[z]]$.  The operator $U_g$ is an automorphism of $\calP$; it need not be an element of the finite-order algebra $\algA$.

\begin{theorem}\label{thm:eigenbasis}
Let $(E,H,F)$ correspond to $(g,\lambda)$ as in Theorem~\ref{thm:classification}.  On $\calP$,
\[
 Y=U_gXU_g^{-1},
 \qquad
 F=U_g(-D)U_g^{-1},
\]
and consequently
\[
 H=U_g(2XD-\lambda)U_g^{-1},
 \qquad
 E=U_g(X^2D-\lambda X)U_g^{-1}.
\]
The polynomials
\[
 p_n(x)=U_gx^{\underline n},
 \qquad n\ge0,
\]
are monic of degree $n$ and satisfy
\begin{equation}\label{eq:basis-actions}
 Dp_n=np_{n-1},
 \qquad
 Yp_n=p_{n+1},
\end{equation}
\begin{equation}\label{eq:sl2-actions}
 Fp_n=-np_{n-1},
 \qquad
 Hp_n=(2n-\lambda)p_n,
 \qquad
 Ep_n=(n-\lambda)p_{n+1}.
\end{equation}
For each $n$, the polynomial $p_n$ is the unique monic degree-$n$ eigenpolynomial of $H$ with eigenvalue $2n-\lambda$.  Their exponential generating function is
\begin{equation}\label{eq:egf}
 \sum_{n=0}^{\infty}p_n(x)\frac{z^n}{n!}
 =e^{G(z)}(1+z)^x.
\end{equation}
\end{theorem}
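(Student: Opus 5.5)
The argument reduces everything to the single conjugation identity $U_gXU_g^{-1}=X+g(D)$ on $\calP$. I would carry it out in the following order.

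\textbf{Step 1: the conjugation identity.} All operators act on $\calP$, where $D$ is locally nilpotent. So any formal power series in $D$ acts locally finitely, and the relation $[D,X]=1$ gives the Weyl-algebra identity
\[
 [X,A(D)]=-A'(D)\qquad\text{for } A\in\K[[D]].
\]
To justify this for power series, note that on any polynomial only finitely many terms of $A$ act, so the polynomial identity suffices. Applying it with $A=e^{G}$ gives
\[
 Xe^{G}=e^{G}X-G'e^{G}=e^{G}\bigl(X-g\bigr).
\]
Hence
\[
 U_gXU_g^{-1}=X+g=Y.
\]
Here $g(D)$ is read through its Taylor expansion at $D=0$, which exists because $R$ is regular at $0$. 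On $\calP$ this expansion acts as the Laurent polynomial $g(S)$ in the finite algebra $\algA$, and Lemma~\ref{lem:faithful} reconciles the two readings.

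\textbf{Step 2: conjugating the triple.} Since $U_g$ commutes with $D$, we get $F=U_g(-D)U_g^{-1}$. Substituting into the normal form of Theorem~\ref{thm:classification} gives the conjugation formulas for $H$ and $E$ immediately, since $Y^2D-\lambda Y$ and $2YD-\lambda$ are images of $X^2D-\lambda X$ and $2XD-\lambda$.

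\textbf{Step 3: the basis actions.} Because $U_g=1+(\text{terms in }D\K[[D]])$ and $D$ lowers degree, $p_n=U_gx^{\underline n}$ is monic of degree $n$. The actions \eqref{eq:basis-actions} follow by transporting $Dx^{\underline n}=nx^{\underline{n-1}}$ and $Xx^{\underline n}=x^{\underline{n+1}}$ through $U_g$. Then \eqref{eq:sl2-actions} is a direct computation. For example,
\[
 Ep_n=Y^2Dp_n-\lambda Yp_n=np_{n+1}-\lambda p_{n+1}=(n-\lambda)p_{n+1},
\]
and $Hp_n=(2n-\lambda)p_n$ follows the same way.

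\textbf{Step 4: uniqueness of the eigenpolynomial.} $H$ preserves each space $\calP_{\le n}$ of polynomials of degree at most $n$, and in the basis $p_0,\dots,p_n$ it is diagonal with eigenvalues $2k-\lambda$. These eigenvalues are distinct because $\operatorname{char}\K=0$. So the $(2n-\lambda)$-eigenspace in $\calP_{\le n}$ is $\K p_n$, and any monic degree-$n$ eigenpolynomial with that eigenvalue must equal $p_n$.

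\textbf{Step 5: the generating function.} Start from the falling-factorial expansion
\[
 \sum_n x^{\underline n}\frac{z^n}{n!}=(1+z)^x\quad\text{in }\K[x][[z]].
\]
Apply $U_g$ coefficientwise in $z$, which is legitimate since each coefficient is a polynomial. Because $D(1+z)^x=z(1+z)^x$, we have $A(D)(1+z)^x=A(z)(1+z)^x$ for any $A\in\K[[D]]$. The right side converges $z$-adically because $A$ has no negative powers, and this gives $e^{G(z)}(1+z)^x$.

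The main subtlety is the bookkeeping in Steps 1 and 5. One must make sure the formal series $e^G$, which is not in $\algA$, interacts correctly with $X$ and with the generating series. Both are handled by working degree by degree on $\calP$ and in each $z$-coefficient, so that only finite sums ever appear.
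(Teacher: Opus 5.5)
Your proposal is correct and follows essentially the same route as the paper: you establish the conjugation $U_gXU_g^{-1}=X+g(D)$ (via the derivation identity $[X,A(D)]=-A'(D)$ rather than Baker--Campbell--Hausdorff with a vanishing second commutator, which amounts to the same computation), transport the falling-factorial actions through $U_g$, get uniqueness from one-dimensional eigenspaces, and obtain the generating function from $D(1+z)^x=z(1+z)^x$. One minor correction: the agreement on $\calP$ between the Taylor series of $g$ at $D=0$ and the Laurent polynomial $g(S)$ comes from $A\mapsto A(D)$ being a ring homomorphism $\K[[D]]\to\operatorname{End}(\calP)$ that sends $1+D$ to the invertible operator $S$, hence $(1+D)^{-1}$ to $S^{-1}$; faithfulness (Lemma~\ref{lem:faithful}) is not what is needed there.
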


\begin{proof}
Since $[G(D),X]=G'(D)=g(D)$ and the next iterated commutator vanishes, the Baker--Campbell--Hausdorff formula gives
\[
 U_gXU_g^{-1}=X+g(D)=Y.
\]
The operator $U_g$ commutes with $D$, proving the similarity formulas.  Applying them to the falling-factorial basis gives \eqref{eq:basis-actions} and \eqref{eq:sl2-actions}.  Since $U_g$ is the identity plus degree-lowering terms, $p_n$ is monic of degree $n$.  The eigenspaces of $2XD-\lambda$ on $\calP$ are the one-dimensional spans of $x^{\underline n}$; similarity by $U_g$ therefore proves uniqueness of the monic eigenpolynomials.  Finally,
\[
 \sum_{n\ge0}x^{\underline n}\frac{z^n}{n!}=(1+z)^x,
\]
and $D$ acts on $(1+z)^x$ by multiplication by $z$.  Hence $U_g$ acts on this generating function by multiplication by $e^{G(z)}$.
\end{proof}

\begin{remark}\label{rem:shear-similarity}
The automorphism $\sigma_g$ in Theorem~\ref{thm:classification} belongs to the algebraic classification inside $\algA$.  The operator $U_g=\exp(G(D))$ instead implements the same shear by similarity on the polynomial module $\calP$; unless $G$ is suitably special, $U_g$ is an infinite formal series and does not belong to the finite-order algebra $\algA$.
\end{remark}

\begin{corollary}\label{cor:explicit-equation}
Suppose
\[
 g(S)=\sum_{j=p}^{q}c_jS^j.
\]
Then every eigenpolynomial $p_n$ in Theorem~\ref{thm:eigenbasis} satisfies
\begin{equation}\label{eq:general-difference-equation}
 x\bigl(p_n(x)-p_n(x-1)\bigr)
 +\sum_{j=p}^{q}c_j\bigl(p_n(x+j+1)-p_n(x+j)\bigr)
 =np_n(x).
\end{equation}
The spectral equation is independent of $\lambda$.  Conversely, every polynomial eigenbasis arising from a fixed-lowering triple satisfies \eqref{eq:general-difference-equation} for the unique Laurent polynomial recovered from $H$ by Proposition~\ref{prop:recognition}.
\end{corollary}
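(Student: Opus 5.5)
The plan is to derive the difference equation directly from the eigenvalue relation in Theorem~\ref{thm:eigenbasis}, rewritten in shift form using \eqref{eq:H-shift-form}. By Theorem~\ref{thm:eigenbasis}, $Hp_n=(2n-\lambda)p_n$. Equation \eqref{eq:H-shift-form} writes
\[
 H=2x-2xS^{-1}+2g(S)(S-1)-\lambda .
\]
Adding $\lambda p_n$ to both sides and dividing by $2$ (allowed since $\operatorname{char}\K=0$), we get
\[
 \bigl(x-xS^{-1}+g(S)(S-1)\bigr)p_n=np_n .
\]
The $\lambda$-dependence cancels at this stage, which proves the independence claim.

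Next I will evaluate each operator on a polynomial.
\begin{itemize}
 \item Since $(S^{-1}f)(x)=f(x-1)$, we have $(x-xS^{-1})p_n=x\bigl(p_n(x)-p_n(x-1)\bigr)$.
 \item Writing $g(S)=\sum_{j=p}^{q}c_jS^j$, the product is $g(S)(S-1)=\sum_j c_j(S^{j+1}-S^j)$.
 \item Applied to $p_n$, this gives $\sum_j c_j\bigl(p_n(x+j+1)-p_n(x+j)\bigr)$.
\end{itemize}
The sum is finite because $g$ is a Laurent polynomial. No completion in the shift variable is needed, and the operators act on polynomials exactly. Combining the two items gives \eqref{eq:general-difference-equation}.

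For the converse, take a polynomial eigenbasis arising from some fixed-lowering triple $(E,H,1-S)$. By Theorem~\ref{thm:classification}, $H$ has the normal form with a unique pair $(g,\lambda)$. Proposition~\ref{prop:recognition} recovers that same $g$ from $H$ via \eqref{eq:recognition}. The eigenpolynomials are exactly the $p_n$ of Theorem~\ref{thm:eigenbasis}, by its uniqueness statement for monic eigenpolynomials. If the basis is not normalized monic, it consists of scalar multiples of the $p_n$, and the equation is linear. So the computation above applies with this recovered $g$.

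There is no real obstacle here. The one step needing care is to state clearly that the identification goes through the uniqueness of $(g,\lambda)$ and of the monic eigenpolynomials. Everything else is a direct expansion.
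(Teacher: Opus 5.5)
Your proposal is correct and follows essentially the same route as the paper, which likewise starts from $Hp_n=(2n-\lambda)p_n$, cancels $\lambda$ to get $(X+g(S))(S-1)p_n=np_n$, and expands $X(S-1)=x(1-S^{-1})$ and $g(S)(S-1)=\sum_j c_j(S^{j+1}-S^j)$; this is exactly the content of \eqref{eq:H-shift-form} that you invoke. Your treatment of the converse is more explicit than the paper's, which leaves that direction implicit: you use uniqueness of $(g,\lambda)$ and the fact that the eigenspaces of $H$ for the distinct eigenvalues $2n-\lambda$ are one-dimensional.
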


\begin{proof}
From $Hp_n=(2n-\lambda)p_n$ and $H=2(X+g)D-\lambda$ one obtains
\[
 (X+g(S))(S-1)p_n=np_n.
\]
Now
\[
 X(S-1)=xS^{-1}(S-1)=x(1-S^{-1}),
\]
and
\[
 g(S)(S-1)=\sum_{j=p}^{q}c_j(S^{j+1}-S^j).
\]
Evaluating these shifts at $x$ gives \eqref{eq:general-difference-equation}.
\end{proof}

The preceding similarity belongs to the general finite-operator-calculus and CCR-preserving framework \cite{ChryssomalakosTurbiner2001,Roman1984,RotaKahanerOdlyzko1973}.  The general $\Delta$-Appell generating-function form also appears in \cite[Theorem~2.1]{BenCheikhZaghouani2003}.  The next theorem isolates the additional restriction imposed by finite Laurent-shift support in that the logarithmic derivative may have no pole except possibly at $z=-1$.

\begin{theorem}\label{thm:recognize-sequence}
Let $\{p_n\}_{n\ge0}$ be a monic polynomial sequence satisfying
\[
 Dp_n=np_{n-1}.
\]
Write its exponential generating function uniquely as
\begin{equation}\label{eq:general-delta-appell}
 \sum_{n=0}^{\infty}p_n(x)\frac{z^n}{n!}
 =A(z)(1+z)^x,
 \qquad A(0)=1.
\end{equation}
Then the following are equivalent.
\begin{enumerate}[label=\textup{(\roman*)},leftmargin=2.3em]
\item There exist $E,H\in\algA$ and $\lambda\in\K$ such that $(E,H,1-S)$ is an $\mathfrak{sl}_2$-triple and $Hp_n=(2n-\lambda)p_n$.
\item The logarithmic derivative satisfies
\[
 \frac{A'(z)}{A(z)}\in\K[z,(1+z)^{-1}].
\]
\end{enumerate}
Here $\K[z,(1+z)^{-1}]$ is regarded as a subring of $\K[[z]]$
via Taylor expansion at $z=0$. When these conditions hold, the shear is uniquely recovered by
\begin{equation}\label{eq:g-from-A}
 g(S)=\left.\frac{A'(z)}{A(z)}\right|_{z=S-1},
\end{equation}
whereas $\lambda\in\K$ is arbitrary.  If $g$ is written as in \eqref{eq:g-pq}, then
\begin{equation}\label{eq:A-explicit}
 A(z)=(1+z)^{c_{-1}}
 \exp\!\left(
 \sum_{\substack{j=p\\j\ne-1}}^{q}
 \frac{c_j}{j+1}\bigl((1+z)^{j+1}-1\bigr)
 \right),
\end{equation}
interpreted in $\K[[z]]$, with
\[
 (1+z)^\alpha:=\exp\!\bigl(\alpha\log(1+z)\bigr)
 \qquad(\alpha\in\K).
\]
\end{theorem}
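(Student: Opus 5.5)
The plan is to reduce both directions to Theorem~\ref{thm:eigenbasis} and to the uniqueness of the exponential generating function. First I record the structural fact behind \eqref{eq:general-delta-appell}. The condition $Dp_n=np_{n-1}$ with $p_n$ monic forces the generating function $\Phi(x,z)=\sum p_n z^n/n!$ to satisfy $D\Phi=z\Phi$, so $\Phi(x,z)(1+z)^{-x}$ is annihilated by $D$ coefficientwise in $z$. It is therefore independent of $x$, since $\ker D$ on $\calP$ is $\K$. This gives the unique $A(z)\in\K[[z]]$, and $A(0)=1$ because $p_0=1$. Equivalently, $p_n=A(D)x^{\underline n}$, where $A(D)$ acts locally finitely. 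By Lemma~\ref{lem:faithful}, $A$ is determined by the sequence.

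For (ii)$\Rightarrow$(i), set $\ell(z)=A'(z)/A(z)$. By hypothesis $\ell$ lies in $\K[z,(1+z)^{-1}]$, so $g(D):=\ell(D)$ is an element of $R$. Let $G\in D\K[[D]]$ with $G'=g$. Then $e^{G(z)}$ and $A(z)$ satisfy the same first-order equation $y'=\ell y$ with $y(0)=1$, so uniqueness of formal power series solutions gives $A=e^{G}$. Theorem~\ref{thm:classification} now produces a triple $(E,H,1-S)$ for this $g$ and any $\lambda$. By Theorem~\ref{thm:eigenbasis}, its eigenpolynomials have generating function $e^{G(z)}(1+z)^x=A(z)(1+z)^x$, so they coincide with the given $p_n$, and $Hp_n=(2n-\lambda)p_n$. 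The same argument shows that $\lambda$ is arbitrary.

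For (i)$\Rightarrow$(ii), suppose a triple exists. Theorem~\ref{thm:classification} writes it via some $(g,\lambda)$ with $g\in R$. Theorem~\ref{thm:eigenbasis} says the monic degree-$n$ eigenpolynomial of $H$ with eigenvalue $2n-\lambda$ is unique, so the given $p_n$ equal $U_gx^{\underline n}$. Comparing generating functions, and using the uniqueness of $A$ established above, gives $A=e^{G}$. Hence $A'/A=G'=g(z)$, which lies in $\K[z,(1+z)^{-1}]$. This also gives \eqref{eq:g-from-A} and the uniqueness of $g$, because $g$ is determined by $A$.

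The formula \eqref{eq:A-explicit} comes from integrating $g(z)=\sum_j c_j(1+z)^j$ term by term in $\K[[z]]$. For $j\ne-1$ the antiderivative vanishing at $0$ is $\frac{c_j}{j+1}((1+z)^{j+1}-1)$, and for $j=-1$ it is $c_{-1}\log(1+z)$. Exponentiating, using $\exp(a+b)=\exp a\exp b$ in $\K[[z]]$ and the definition of $(1+z)^{\alpha}$, gives the result. I expect the main care point to be the identification of the ring $\K[z,(1+z)^{-1}]\subset\K[[z]]$ with $R$ under $z\leftrightarrow D$, which is needed so that $g$ is a genuine finite Laurent polynomial in $S$. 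This holds because $(1+z)^{-1}$ is a unit in $\K[[z]]$ and the map $\K[D,(1+D)^{-1}]\to\K[[D]]$ given by Taylor expansion is injective.
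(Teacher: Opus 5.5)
Your proof is correct. For (ii)$\Rightarrow$(i) and for formula \eqref{eq:A-explicit} it is the paper's argument. The paper sets $G=\log A$ directly, so that $U_g=A(D)$; your uniqueness step for $y'=\ell y$ makes the same identification explicit.

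For (i)$\Rightarrow$(ii) you use a different key step. The paper substitutes the normal form $H=2YD-\lambda$ into the eigenvalue equation for $p_{n+1}$ to obtain $Yp_n=p_{n+1}$. It compares this with $A(D)XA(D)^{-1}=X+A'(D)/A(D)$ and then uses the faithfulness of the locally finite $\K[[D]]$-action (second half of Lemma~\ref{lem:faithful}) to conclude $g(D)=A'(D)/A(D)$. You instead use the uniqueness clause of Theorem~\ref{thm:eigenbasis} to get $p_n=U_gx^{\underline n}$. Comparing exponential generating functions then gives $A=e^{G}$ and $A'/A=g$ at once.

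Your version is shorter. It replaces the operator-level identification by a comparison of coefficients in $\K[x][[z]]$, so the appeal to Lemma~\ref{lem:faithful} in your first paragraph is not actually needed. The paper's version shows directly from the normal form of $H$ that $Y=X+g(D)$ raises the given sequence, and it identifies $g$ at the operator level.

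One small point to add: your use of uniqueness requires the scalar $\mu$ in $H=2YD-\mu$ from Theorem~\ref{thm:classification} to coincide with the $\lambda$ of (i). This follows by applying $H$ to $p_0=1$, which gives $-\mu=-\lambda$. The paper makes the same identification tacitly.
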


\begin{proof}
To justify \eqref{eq:general-delta-appell} directly, set $P(x,z)=\sum_{n\ge0}p_n(x)z^n/n!$.  The lowering relation gives $DP=zP$, hence $P(x+1,z)=(1+z)P(x,z)$.  Therefore $P(x,z)/(1+z)^x$ is independent of $x$; monicity gives a unique series $A(z)$ with $A(0)=1$.  Equivalently, if $U=A(D)$ is interpreted as a formal power series in the locally nilpotent operator $D$, then $p_n=Ux^{\underline n}$.  Since $A(0)=1$, the formal logarithm $G=\log A$ exists and
\[
 UXU^{-1}=X+G'(D)=X+\frac{A'(D)}{A(D)}.
\]
Assume (i).  By Theorem~\ref{thm:classification}, write $H=2YD-\lambda$ with $Y=X+g(D)$.  Applying the eigenvalue equation to $p_{n+1}$ gives
\[
 2(n+1)Yp_n-\lambda p_{n+1}
 =(2(n+1)-\lambda)p_{n+1},
\]
so $Yp_n=p_{n+1}$ for every $n\ge0$.  Hence $Y=UXU^{-1}$ on the basis $\{p_n\}$.  The second assertion of Lemma~\ref{lem:faithful} identifies the corresponding locally finite power series in $D$, so
\[
 g(D)=\frac{A'(D)}{A(D)}\in R.
\]
This is (ii).  Conversely, if (ii) holds, define $g(D)=A'(D)/A(D)\in R$.  For any $\lambda\in\K$, Theorem~\ref{thm:classification} defines a triple, and Theorem~\ref{thm:eigenbasis} gives the required eigenvalue equation.  Formula \eqref{eq:A-explicit} follows by integrating
\[
 G'(z)=g(1+z)=\sum_{j=p}^{q}c_j(1+z)^j
\]
with $G(0)=0$, and exponentiating.
\end{proof}

\section{Degree-side recurrence and the stencil--bandwidth filtration}

Let $\{p_n\}$ be the basis in Theorem~\ref{thm:eigenbasis}.  Define
\begin{equation}\label{eq:h-def}
 h(D)=(1+D)g(D)=Sg(S)
\end{equation}
and write its Taylor expansion at $D=0$ as
\[
 h(D)=\sum_{k=0}^{\infty}h_kD^k.
\]

\begin{lemma}\label{lem:intersection}
Inside $\K(S)$ one has
\[
 \K[S,S^{-1}]\cap\K[D]=\K[S],
 \qquad D=S-1.
\]
Equivalently, a Laurent polynomial in $S$ is a polynomial in $D$ if and only if it has no negative power of $S$.
\end{lemma}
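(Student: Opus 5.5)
The plan is to prove the two inclusions separately, working inside the field $\K(S)=\K(D)$ with $S=1+D$. The inclusion $\K[S]\subseteq\K[S,S^{-1}]\cap\K[D]$ is immediate: $S=1+D$ is a polynomial in $D$, so every polynomial in $S$ is a polynomial in $D$, and it is trivially a Laurent polynomial in $S$.

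For the reverse inclusion, take $r\in\K[S,S^{-1}]\cap\K[D]$. Since $\K[D]=\K[S-1]=\K[S]$ as subrings of $\K(S)$, the element $r$ is already a polynomial in $S$, and we are done. The equality $\K[D]=\K[S]$ holds because the substitution $D=S-1$ and its inverse $S=D+1$ are mutually inverse $\K$-algebra maps between polynomial rings.

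If a more hands-on argument is wanted, I would argue by poles instead. Write $r=S^{-m}a(S)$ with $a\in\K[S]$, $a(0)\neq0$, and $m\ge0$ minimal. A polynomial in $D$ has no pole at any finite point of the $D$-line; in particular it has no pole at $D=-1$, which is $S=0$. So $m=0$, and hence $r=a(S)\in\K[S]$.

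The equivalent formulation in the statement is a restatement of this. A Laurent polynomial $\sum_j c_jS^j$ lies in $\K[D]$ exactly when it lies in $\K[S]$, that is, when $c_j=0$ for all $j<0$. This uses uniqueness of Laurent expansions, which holds because $\{S^j\}_{j\in\Z}$ is linearly independent over $\K$ in $\K(S)$.

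I expect no real obstacle here. The only point needing care is to make the identification $\K[D]=\K[S]$ explicit, so that the intersection is taken inside a common ambient field, namely the subring $R\subset\K(D)$ of Section 2.
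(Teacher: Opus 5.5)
Your proposal is correct and follows the paper's proof: both reduce the intersection to the identity $\K[D]=\K[S]$, and both treat the reformulation by noting that $S^{-m}a(S)$ with $m>0$ and $a(0)\neq0$ has a pole at $S=0$ (that is, at $D=-1$), so it cannot lie in $\K[S]$. Your appeal to the linear independence of $\{S^j\}_{j\in\Z}$ just makes explicit the uniqueness of Laurent expansions, which the paper uses implicitly when it speaks of a Laurent polynomial having ``no negative power of $S$''.
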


\begin{proof}
Since $\K[D]=\K[S]$, only the intersection statement requires comment.  If a Laurent polynomial $S^{-m}a(S)$ with $a(S)\in\K[S]$ and $a(0)\ne0$ were a polynomial in $S$, then it would be regular at $S=0$, which is impossible unless $m=0$.
\end{proof}

\begin{definition}\label{def:bandwidth}
The basis $\{p_n\}$ has \emph{uniform lower bandwidth} $r$ if $r$ is the least nonnegative integer such that
\[
 xp_n\in\Span\{p_{n+1},p_n,p_{n-1},\ldots,p_{n-r}\}
\]
for every $n$, with $p_j=0$ for $j<0$.  If no such $r$ exists, the lower bandwidth is infinite.
\end{definition}

\begin{theorem}\label{thm:recurrence}
For every $n\ge0$,
\begin{equation}\label{eq:degree-recurrence}
 xp_n=p_{n+1}+(n-h_0)p_n-
 \sum_{k=1}^{n}h_k\fall{n}{k}p_{n-k},
\end{equation}
where $\fall{n}{k}=n(n-1)\cdots(n-k+1)$ denotes the falling factorial of the integer $n$, with $\fall{n}{0}=1$.
Consequently, the lower bandwidth is finite if and only if
\begin{equation}\label{eq:finite-band-condition}
 g(S)\in S^{-1}\K[S].
\end{equation}
More precisely, suppose $g\ne0$ is written as in \eqref{eq:g-pq}.
\begin{enumerate}[label=\textup{(\alph*)},leftmargin=2.3em]
\item If $p<-1$, the lower bandwidth is infinite.
\item If $p\ge-1$ and $q=-1$, the lower bandwidth is zero and
\[
 xp_n=p_{n+1}+(n-c_{-1})p_n.
\]
\item If $p\ge-1$ and $q\ge0$, the lower bandwidth is exactly $q+1$.  The coefficient of the extreme term $p_{n-q-1}$ is
\[
 -c_q\fall{n}{q+1},
\]
which is nonzero for $n\ge q+1$.
\end{enumerate}
\end{theorem}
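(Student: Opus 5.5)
The plan is to express multiplication by $x$ through the operators $Y$ and $D$, whose action on $\{p_n\}$ is known from \eqref{eq:basis-actions}. Since $X=xS^{-1}$, we have $x=XS=X(1+D)$ in $\algA$. Writing $X=Y-g(D)$ and using that $g(D)$ commutes with $D$ gives the operator identity
\[
 x=Y(1+D)-(1+D)g(D)=Y(1+D)-h(D).
\]
Apply this to $p_n$. By \eqref{eq:basis-actions}, $Y(1+D)p_n=Y(p_n+np_{n-1})=p_{n+1}+np_n$. For the second term, $D^kp_n=\fall{n}{k}p_{n-k}$, and this vanishes for $k>n$. Hence $h(D)$ acts through its Taylor series as a finite sum: $h(D)p_n=\sum_{k=0}^{n}h_k\fall{n}{k}p_{n-k}$. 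This locally finite interpretation agrees with the action of $h(D)$ as an element of $\algA$, because on $\calP$ each $S^{-1}=(1+D)^{-1}$ acts by its geometric series in the locally nilpotent $D$. Separating off the $k=0$ term yields \eqref{eq:degree-recurrence}.

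For the bandwidth statements, note that for $k\le n$ the coefficient of $p_{n-k}$ in \eqref{eq:degree-recurrence} is $-h_k\fall{n}{k}$, and $\fall{n}{k}\neq0$. Since the $p_j$ are linearly independent, the lower bandwidth is therefore exactly $\sup\{k\ge1:h_k\ne0\}$, taken to be $0$ if $h_k=0$ for all $k\ge1$. So the bandwidth is finite iff only finitely many $h_k$ are nonzero, i.e.\ iff $h\in\K[D]$. By Lemma~\ref{lem:intersection}, a Laurent polynomial $h=Sg(S)$ lies in $\K[D]$ iff $Sg(S)\in\K[S]$, i.e.\ iff $g\in S^{-1}\K[S]$. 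The Taylor coefficients of an element of $R$ at $D=0$ determine it (regard $R\subset\K[[D]]$ injectively). Consequently, if $h\notin\K[D]$, infinitely many $h_k$ are nonzero. This gives \eqref{eq:finite-band-condition} and case (a), since $p<-1$ means $Sg$ has a negative power of $S$.

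For case (b), $g=c_{-1}S^{-1}$, so $h=c_{-1}$ is constant with $h_0=c_{-1}$ and $h_k=0$ for $k\ge1$. For case (c), $h=\sum_{j=p}^{q}c_jS^{j+1}$ is a polynomial in $S=1+D$ of degree $q+1\ge1$ with leading coefficient $c_q$. Thus, as a polynomial in $D$, it has degree $q+1$ and $h_{q+1}=c_q$, so the extreme coefficient is $-c_q\fall{n}{q+1}\ne0$ for $n\ge q+1$. The case $g=0$ is the falling-factorial recurrence with bandwidth zero.

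The only point needing care is the identification of the action of $h(D)\in R$ on $\calP$ with its Taylor series; I would justify it by Lemma~\ref{lem:faithful} together with local nilpotence of $D$. Everything else is bookkeeping.
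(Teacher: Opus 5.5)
Your proposal is correct and follows the paper's proof essentially line by line: the operator identity $x=Y(1+D)-h(D)$, the evaluations $Y(1+D)p_n=p_{n+1}+np_n$ and $h(D)p_n=\sum_{k\le n}h_k\fall{n}{k}p_{n-k}$, and then Lemma~\ref{lem:intersection} to turn finiteness of the set of nonzero $h_k$ into the condition $g\in S^{-1}\K[S]$. Your extra remarks fill in points the paper leaves implicit: that $S^{-1}$ acts on $\calP$ by the geometric series in $D$, and that injectivity of $R\to\K[[D]]$ forces infinitely many nonzero $h_k$ when $h\notin\K[D]$. For the first of these you need only that the locally finite $\K[[D]]$-action is multiplicative and that $S$ is invertible on $\calP$, not the faithfulness statement of Lemma~\ref{lem:faithful}.
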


\begin{proof}
Since $x=XS=X(D+1)$ and $X=Y-g(D)$,
\[
 x=Y(D+1)-g(D)(D+1)=Y(D+1)-h(D).
\]
Using $Dp_n=np_{n-1}$ and $Yp_n=p_{n+1}$ gives
\[
 Y(D+1)p_n=p_{n+1}+np_n.
\]
Moreover
\[
 h(D)p_n=\sum_{k=0}^{n}h_kD^kp_n
 =\sum_{k=0}^{n}h_k\fall{n}{k}p_{n-k},
\]
which proves \eqref{eq:degree-recurrence}. For every $k$ with $h_k\ne0$, the coefficient of $p_{n-k}$ is $-h_k\fall{n}{k}$, nonzero for every $n\ge k$.  Thus finite lower bandwidth is equivalent to $h(D)$ being a polynomial.  Since $h(D)=Sg(S)$ is a Laurent polynomial in $S$, Lemma~\ref{lem:intersection} shows that it is a polynomial in $D=S-1$ if and only if it contains no negative powers of $S$.  This is equivalent to \eqref{eq:finite-band-condition}.  If $q=-1$, then $g(S)=c_{-1}S^{-1}$ and $h(D)=c_{-1}$.  If $q\ge0$, then $h(D)$ has degree $q+1$ and leading coefficient $c_q$, giving the stated extreme coefficient and exact bandwidth.
\end{proof}

\begin{theorem}\label{thm:numerical-bandwidth}
Let $r\in\Z_{\ge0}\cup\{\infty\}$ be the exact lower bandwidth of the eigenbasis.  The following conditions are equivalent:
\begin{enumerate}[label=\textup{(\roman*)},leftmargin=2.3em]
\item $r<\infty$;
\item $g(S)\in S^{-1}\K[S]$;
\item $\nu_-(H)=-1$;
\item $\nu_-(E)=-2$.
\end{enumerate}
When these conditions hold,
\begin{equation}\label{eq:numerical-bandwidth}
 r=\nu_+(H),
 \qquad
 \nu_+(E)=2r-1.
\end{equation}
Thus, within the fixed-lowering Laurent-shift category, the recurrence bandwidth is determined by the right endpoint of the Cartan stencil, and the right endpoint of the raising stencil is then forced.
\end{theorem}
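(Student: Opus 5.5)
The plan is to reduce everything to Theorem~\ref{thm:recurrence} and Theorem~\ref{thm:support}, splitting into the cases $g=0$ and $g\ne0$. The equivalence (i)$\Leftrightarrow$(ii) is exactly the finiteness criterion \eqref{eq:finite-band-condition} of Theorem~\ref{thm:recurrence}. When $g=0$ this holds trivially, and the recurrence reduces to $xp_n=p_{n+1}+np_n$ with $p_n=x^{\underline n}$.

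For (ii)$\Leftrightarrow$(iii)$\Leftrightarrow$(iv) I will use the endpoint formulas. If $g=0$, Theorem~\ref{thm:support} gives $\nu_-(H)=-1$ and $\nu_-(E)=-2$ directly, consistent with (ii). If $g\ne0$ is written as in \eqref{eq:g-pq}, then \eqref{eq:H-endpoints} gives $\nu_-(H)=\min\{-1,p\}$, which equals $-1$ if and only if $p\ge-1$. Likewise \eqref{eq:E-endpoints} gives $\nu_-(E)=\min\{-2,2p\}$, which equals $-2$ if and only if $2p\ge-2$, that is, $p\ge-1$. Finally, $p\ge-1$ is precisely the statement $g\in S^{-1}\K[S]$. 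This closes the cycle of equivalences.

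For the numerical identities \eqref{eq:numerical-bandwidth}, assume (ii) and run through the three cases of Theorem~\ref{thm:recurrence}.
\begin{itemize}
\item If $g=0$, then $r=0$, $\nu_+(H)=0$ and $\nu_+(E)=-1=2\cdot0-1$.
\item If $g\ne0$ with $q=-1$, case~(b) gives $r=0$. Theorem~\ref{thm:support} gives $\nu_+(H)=\max\{0,0\}=0$ and $\nu_+(E)=\max\{-1,-1\}=-1$.
\item If $q\ge0$, case~(c) gives $r=q+1$. Theorem~\ref{thm:support} gives $\nu_+(H)=\max\{0,q+1\}=q+1$ and $\nu_+(E)=\max\{-1,2q+1\}=2q+1=2r-1$.
\end{itemize}

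No step looks genuinely hard, because the substantive work — the exact endpoints without cancellation, and the exact bandwidth with its nonvanishing extreme coefficient — was already done in the two cited theorems. The only point needing care is the boundary case $q=-1$ (including $g=0$), where bandwidth zero must match $\nu_+(H)=0$. Here I rely on the boundary non-cancellation already established in Theorem~\ref{thm:support}, namely the nonzero coefficient $(x+c_{-1})(x+c_{-1}-1-\lambda)$ at shift $-1$ for $E$, and the $x$-dependent term $2x$ at shift $0$ for $H$.
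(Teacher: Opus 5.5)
Your proposal is correct and follows the paper's proof essentially step for step. Both arguments first dispose of the case $g=0$ directly from Theorems~\ref{thm:support} and \ref{thm:recurrence}. Both then use $p\ge-1$ as the common criterion for (i)--(iv), and both split into $q=-1$ versus $q\ge0$ to obtain $r=\nu_+(H)$ and $\nu_+(E)=2r-1$.
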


\begin{proof}
The case $g=0$ follows directly from Theorems~\ref{thm:support} and \ref{thm:recurrence}: one has $r=0$, $\nu_+(H)=0$, and $\nu_+(E)=-1$.  Suppose $g\ne0$ and write it as in \eqref{eq:g-pq}.  By Theorem~\ref{thm:recurrence}, finite bandwidth is equivalent to $p\ge-1$, which is equivalent to $g\in S^{-1}\K[S]$.  By Theorem~\ref{thm:support}, the same inequality is equivalent separately to $\nu_-(H)=-1$ and to $\nu_-(E)=-2$.  If $q=-1$, then $r=0$, $\nu_+(H)=0$, and $\nu_+(E)=-1$.  If $q\ge0$, then $r=q+1$, while Theorem~\ref{thm:support} gives $\nu_+(H)=q+1$ and $\nu_+(E)=2q+1=2r-1$.
\end{proof}

For an expansion $v=\sum_m a_mp_m$ in the polynomial basis, write $[p_m]v=a_m$.

\begin{theorem}\label{thm:tail}
Suppose $g\ne0$ is written as in \eqref{eq:g-pq} and $p<-1$.  Put
\[
 J=\min\{q,-2\}.
\]
For every integer $k>\max\{q+1,0\}$, the Taylor coefficient of $h(D)=Sg(S)$ is
\begin{equation}\label{eq:tail-coefficients}
 h_k=(-1)^kP_g(k),
 \qquad
 P_g(k)=\sum_{j=p}^{J}c_j
 \binom{k-j-2}{-j-2}.
\end{equation}
The expression $P_g(t)$ is a nonzero polynomial in $t$ of degree $-p-2$, with leading coefficient
\begin{equation}\label{eq:tail-leading}
 \frac{c_p}{(-p-2)!}.
\end{equation}
For every $k>\max\{q+1,0\}$ and $n\ge k$, the lag-$k$ recurrence coefficient is
\begin{equation}\label{eq:tail-recurrence-coefficient}
 [p_{n-k}](xp_n)=-(-1)^kP_g(k)\fall{n}{k}.
\end{equation}
Consequently $P_g(k)$, and hence this coefficient, is nonzero for all sufficiently large $k$.  Every sufficiently deep lag therefore occurs, so the recurrence has no uniform finite lower bandwidth.  Conversely, the eventual signed coefficients determine $P_g$, and hence
\[
 p=-\deg P_g-2,
 \qquad
 c_p=(-p-2)!\,[t^{-p-2}]P_g(t).
\]
Thus the lowest Laurent term of $g$ is recoverable from the degree recurrence.
\end{theorem}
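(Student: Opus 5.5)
The plan is to compute the Taylor coefficients of $h(D)=Sg(S)=\sum_{j=p}^{q}c_j(1+D)^{j+1}$ term by term, and then read off the recurrence coefficients from Theorem~\ref{thm:recurrence}.

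\textbf{Taylor coefficients.} For each $j$, the $k$th Taylor coefficient of $(1+D)^{j+1}$ is $\binom{j+1}{k}$. We split the sum according to the sign of $j+1$.

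\emph{Terms with $j\ge-1$.} Here $(1+D)^{j+1}$ is a polynomial of degree $j+1\le q+1$. It therefore contributes nothing to $h_k$ when $k>q+1$. (The condition $k>0$ handles the case $j=-1$, where the term is constant.)

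\emph{Terms with $j\le-2$.} Put $m=-j-1\ge1$. The standard negative-binomial identity gives
\[
\binom{-m}{k}=(-1)^k\binom{m+k-1}{k}=(-1)^k\binom{k-j-2}{-j-2}.
\]
These indices $j$ range over $p\le j\le\min\{q,-2\}=J$. Summing them yields \eqref{eq:tail-coefficients}, valid for all $k>\max\{q+1,0\}$.

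\textbf{Polynomiality and leading term.} Consider $\binom{t-j-2}{-j-2}$ as a function of $t$. It is the polynomial $\frac{1}{(-j-2)!}\prod_{i=1}^{-j-2}(t+i)$, of degree $-j-2$ and leading coefficient $1/(-j-2)!$. Degrees strictly increase as $j$ decreases, so the term $j=p$ alone attains the maximal degree $-p-2$. This gives a nonzero polynomial with leading coefficient $c_p/(-p-2)!$, which is \eqref{eq:tail-leading}. When $p=-2$ the polynomial is the nonzero constant $c_{-2}$ plus nothing of higher degree, so the claim still holds.

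\textbf{Recurrence coefficients.} Substitute $h_k$ into \eqref{eq:degree-recurrence}, which gives \eqref{eq:tail-recurrence-coefficient} directly. The falling factorial $\fall{n}{k}$ is nonzero for $n\ge k$, since $\K$ has characteristic zero. A nonzero polynomial has finitely many roots, so $P_g(k)\ne0$ for all large $k$. Hence infinitely many lags occur, and there is no uniform finite bandwidth.

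\textbf{Converse.} The eventual coefficients determine $(-1)^kP_g(k)$ for infinitely many $k$, after dividing by the nonzero quantity $-\fall{n}{k}$. A polynomial is determined by its values on infinitely many integers, so these coefficients determine $P_g$. From $P_g$ we recover $p=-\deg P_g-2$ and $c_p$ via the leading coefficient.

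I expect no real obstacle. The only care needed is the bookkeeping of the threshold $k>\max\{q+1,0\}$, so that the polynomial terms with $j\ge-1$ are excluded. This includes the constant term from $j=-1$ and the case $q<-1$.
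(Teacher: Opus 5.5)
Your proposal is correct and follows the paper's proof essentially step for step: the termwise expansion of $h(D)=\sum_j c_j(1+D)^{j+1}$, the negative-binomial identity for $j\le-2$, dominance of the $j=p$ summand, and substitution into \eqref{eq:degree-recurrence}. Your explicit argument for the converse, via values of $P_g$ at infinitely many integers, is slightly more complete than the paper's. One cosmetic correction: the condition $k>0$ is what makes \eqref{eq:tail-recurrence-coefficient} valid, because the lag-$0$ coefficient is $n-h_0$ rather than $-h_0$. It is not needed to exclude the $j=-1$ term, since $k>q+1$ already does that.
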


\begin{proof}
From \eqref{eq:h-def}
\[
 h(D)=\sum_{j=p}^{q}c_j(1+D)^{j+1}.
\]
For $j\ge-1$, the summand is a polynomial of degree $j+1$ and therefore contributes nothing to $h_k$ once $k>q+1$.  For $j\le-2$, the negative-binomial expansion gives
\[
 [D^k](1+D)^{j+1}
 =\binom{j+1}{k}
 =(-1)^k\binom{k-j-2}{-j-2}.
\]
Here and in \eqref{eq:tail-coefficients}, the binomial polynomials are interpreted over the prime subfield $\mathbb Q\subset\K$.  This proves \eqref{eq:tail-coefficients}.  Each summand of $P_g(t)$ has degree $-j-2$ and leading coefficient $c_j/(-j-2)!$.  The unique summand of largest degree comes from the lowest Laurent exponent $j=p$, proving \eqref{eq:tail-leading}.  Since $\K$ has characteristic zero, a nonzero polynomial has only finitely many roots among the embedded nonnegative integers.  The recurrence coefficient formula now follows from \eqref{eq:degree-recurrence}.
\end{proof}

\begin{corollary}\label{thm:filtration}
For every $m\ge0$, the following conditions are equivalent.
\begin{enumerate}[label=\textup{(\roman*)},leftmargin=2.3em]
\item The fixed-lowering triple belongs to the stencil layer $\calL_m$.
\item The Laurent shear satisfies
\[
 g(S)\in\Span_{\K}\{S^{-1},1,S,\ldots,S^{m-1}\}.
\]
\item The associated polynomial eigenbasis has uniform lower bandwidth at most $m$.
\end{enumerate}
For every $m\ge1$,
\begin{equation}\label{eq:exact-strata}
 (E,H,F)\in\calL_m\setminus\calL_{m-1}
 \quad\Longleftrightarrow\quad
 \text{$\{p_n\}$ has exact lower bandwidth $m$}.
\end{equation}
The minimal layer $\calL_0$ consists of translated falling-factorial systems and has bandwidth zero.
\end{corollary}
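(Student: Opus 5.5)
The plan is to chain the earlier results: Corollary~\ref{cor:all-layers} for (i)$\Leftrightarrow$(ii), and Theorem~\ref{thm:recurrence} (equivalently Theorem~\ref{thm:numerical-bandwidth}) for (ii)$\Leftrightarrow$(iii). Corollary~\ref{cor:all-layers} already gives (i)$\Leftrightarrow$(ii) for every $m\ge0$, with the $m=0$ convention that the span is $\K S^{-1}$. So the real work lies in identifying the span condition with the bandwidth bound.

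For (ii)$\Rightarrow$(iii) I will split into cases. If $g=0$ or $g=c_{-1}S^{-1}$, then Theorem~\ref{thm:recurrence}(b) and the remark $h(D)=c_{-1}$ give bandwidth $0\le m$. Otherwise $p\ge-1$ and $0\le q\le m-1$, and Theorem~\ref{thm:recurrence}(c) gives bandwidth exactly $q+1\le m$.

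For (iii)$\Rightarrow$(ii), assume the bandwidth is finite and at most $m$. Theorem~\ref{thm:recurrence}(a) rules out $p<-1$, so $g\in S^{-1}\K[S]$. Then either $q\le -1$, which forces $g\in\K S^{-1}$, or $q\ge0$ with exact bandwidth $q+1\le m$, which gives $q\le m-1$. In both cases $g$ lies in the stated span. The one point needing care is that ``bandwidth at most $m$'' is used in the sense of Definition~\ref{def:bandwidth}: the least admissible $r$ is $\le m$. This matches the exact values computed in Theorem~\ref{thm:recurrence} precisely because part (c) exhibits a nonzero extreme coefficient $-c_q\fall{n}{q+1}$ for $n\ge q+1$.

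For \eqref{eq:exact-strata} with $m\ge1$, I will use the equivalence (i)$\Leftrightarrow$(iii) at levels $m$ and $m-1$. Membership in $\calL_m\setminus\calL_{m-1}$ then means bandwidth $\le m$ but not $\le m-1$, which is exactly bandwidth $m$. The final sentence follows from Corollary~\ref{cor:minimal-layer} together with the $m=0$ case of (i)$\Leftrightarrow$(iii).

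The only potential obstacle is bookkeeping at the boundary $q=-1$ versus $q\ge0$, and at $m=0$, so that the span convention matches bandwidth zero. I expect no genuine difficulty there.
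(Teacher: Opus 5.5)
Your argument is correct and is essentially the paper's proof: both take (i)$\Leftrightarrow$(ii) from Corollary~\ref{cor:all-layers} and get the exact-stratum statement by comparing levels $m$ and $m-1$. The one difference is how (iii) is linked in. The paper proves (i)$\Leftrightarrow$(iii) via the stencil--bandwidth identities $r=\nu_+(H)$, $\nu_+(E)=2r-1$ of Theorem~\ref{thm:numerical-bandwidth}, whereas you prove (ii)$\Leftrightarrow$(iii) directly from the case analysis of Theorem~\ref{thm:recurrence}; since Theorem~\ref{thm:numerical-bandwidth} is itself assembled from Theorems~\ref{thm:support} and~\ref{thm:recurrence}, both routes rest on the same ingredients.
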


\begin{proof}
The equivalence of (i) and (ii) is Corollary~\ref{cor:all-layers}.  If (i) holds, Theorem~\ref{thm:numerical-bandwidth} gives $r=\nu_+(H)\le m$, so (iii) follows.  Conversely, if the bandwidth is at most $m$, Theorem~\ref{thm:numerical-bandwidth} gives $\nu_-(H)=-1$, $\nu_-(E)=-2$, $\nu_+(H)=r\le m$, and $\nu_+(E)=2r-1\le2m-1$; hence the triple belongs to $\calL_m$.  The exact-stratum assertion follows by replacing both weak inequalities with equality.  The description of $\calL_0$ is Corollary~\ref{cor:minimal-layer}.
\end{proof}

We now connect the recurrence terminology to the established $d$-orthogonal literature.  Let $d\ge1$ and let $\mathbf u=(u_0,\ldots,u_{d-1})^{\mathsf T}$ be a vector of linear functionals on $\K[x]$.  A monic sequence $\{p_n\}_{n\ge0}$ is called $d$-orthogonal with respect to $\mathbf u$ if, for $0\le j\le d-1$,
\begin{equation}\label{eq:d-orthogonality-definition}
 \langle u_j,p_m p_n\rangle=0\quad\text{whenever }m>nd+j,
 \qquad
 \langle u_j,p_n p_{nd+j}\rangle\ne0
 \quad(n\ge0).
\end{equation}
The nonvanishing conditions are the regularity conditions.  In the indexing used here, the generalized Favard theorem states that a monic sequence is a regular $d$-orthogonal polynomial sequence if and only if multiplication by $x$ has a recurrence
\begin{equation}\label{eq:d-favard}
 xp_n=p_{n+1}+\sum_{k=0}^{d}\beta_{n,k}p_{n-k},
 \qquad
 \beta_{n,d}\ne0\quad(n\ge d),
\end{equation}
with $p_j=0$ for $j<0$.  This is the monic recurrence and regularity condition of \cite[Definition~1.1 and (1.2)]{BenCheikhZaghouani2003}; the generalized Favard theorem in this normalization is due to Maroni \cite{Maroni1989} (see also \cite{Maroni1981,VanIseghem1987}).

\begin{corollary}\label{cor:d-orthogonal}
For fixed-lowering triples, the following nested loci are exact
\[
 \K[S,S^{-1}]
 \supset
 S^{-1}\K[S]
 \supset
 \Span_{\K}\{S^{-1},1\}
 \supset
 \K S^{-1}.
\]
They correspond respectively to
\begin{enumerate}[label=\textup{(\roman*)},leftmargin=2.3em]
\item all finite Laurent-shift fixed-lowering triples;
\item triples whose eigenbasis has finite lower bandwidth;
\item triples whose eigenbasis has a recurrence with at most three terms;
\item translated falling-factorial systems with a two-term recurrence.
\end{enumerate}
If $g(S)=\sum_{j=-1}^{q}c_jS^j$ with $q\ge0$ and $c_q\ne0$, then the basis is a regular $(q+1)$-orthogonal polynomial sequence.  Equivalently, the exact one-sided layer $\calL_d\setminus\calL_{d-1}$ is the regular $d$-orthogonal $\Delta$-Appell locus for every $d\ge1$.
\end{corollary}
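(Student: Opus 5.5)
The proof assembles results already established: Theorem~\ref{thm:classification} for locus (i), Theorem~\ref{thm:recurrence} for the bandwidth loci (ii)--(iv), and the generalized Favard theorem \eqref{eq:d-favard} for the $d$-orthogonality assertion.

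\emph{Locus (i).} Theorem~\ref{thm:classification} states that every fixed-lowering triple corresponds to a unique $(g,\lambda)$ with $g\in\K[S,S^{-1}]$. So (i) is the whole Laurent ring, and the three inclusions are immediate.

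\emph{Locus (ii).} The first assertion of Theorem~\ref{thm:recurrence} says the lower bandwidth is finite exactly when $g\in S^{-1}\K[S]$.

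\emph{Locus (iii).} A recurrence with at most three terms means $xp_n\in\Span\{p_{n+1},p_n,p_{n-1}\}$ for all $n$, i.e.\ lower bandwidth at most $1$. By Corollary~\ref{thm:filtration} with $m=1$, this holds exactly when $g\in\Span\{S^{-1},1\}$.

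\emph{Locus (iv).} A two-term recurrence means bandwidth $0$. This is case (b) of Theorem~\ref{thm:recurrence} together with the case $g=0$, so $g\in\K S^{-1}$. Corollary~\ref{cor:minimal-layer} identifies these as translated falling factorials, namely $p_n(x)=(x+c)^{\underline n}$.

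The conversions from "number of terms" to "bandwidth" need a little care. Because $p_n$ is monic, the coefficient of $p_{n+1}$ in $xp_n$ is always $1$. Hence a recurrence of $k$ terms means bandwidth $k-2$, with the understanding that "at most" is read uniformly in $n$.

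\emph{$d$-orthogonality.} Take $g=\sum_{j=-1}^{q}c_jS^j$ with $q\ge0$ and $c_q\ne0$, and set $d=q+1$. Then $h(D)=Sg(S)$ is a polynomial in $D$ of degree exactly $q+1$. Formula \eqref{eq:degree-recurrence} then has the shape \eqref{eq:d-favard} with
\[
\beta_{n,k}=-h_k\fall{n}{k}\quad(k\ge1),\qquad \beta_{n,0}=n-h_0 .
\]
By Theorem~\ref{thm:recurrence}(c), $\beta_{n,d}=-c_q\fall{n}{q+1}$. This is nonzero for $n\ge d$, because $\fall{n}{d}$ is a product of positive integers, which are nonzero in characteristic zero. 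The generalized Favard theorem of Maroni then shows that $\{p_n\}$ is a regular $d$-orthogonal sequence.

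\emph{Identification of the layer $\calL_d\setminus\calL_{d-1}$.} This needs both directions.
\begin{itemize}
\item[$\Rightarrow$] By \eqref{eq:exact-strata}, membership in $\calL_d\setminus\calL_{d-1}$ is equivalent to exact bandwidth $d$. That means $g\in S^{-1}\K[S]$ with $q=d-1$, which is the situation just treated, so the sequence is regular $d$-orthogonal.
\item[$\Leftarrow$] Suppose a fixed-lowering eigenbasis (a $\Delta$-Appell sequence realized by a triple) is regular $d$-orthogonal. Favard gives a recurrence \eqref{eq:d-favard} with $\beta_{n,d}\ne0$, so the exact lower bandwidth is $d$. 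Then \eqref{eq:exact-strata} places the triple in $\calL_d\setminus\calL_{d-1}$.
\end{itemize}

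The main obstacle is uniqueness of the recurrence coefficients. Since $\{p_n\}$ is a basis, the expansion of $xp_n$ in it is unique, so the Favard coefficients must agree with those in \eqref{eq:degree-recurrence}. Both directions of the layer identification rest on this.

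The identification should be read as within the fixed-lowering family. The $\Delta$-Appell sequences realized by triples are characterized by Theorem~\ref{thm:recognize-sequence}. Any $\Delta$-Appell sequence with finite bandwidth has $A'/A$ a polynomial in $D$, so it lies in the family automatically. That observation is where I would expect any subtlety to hide.
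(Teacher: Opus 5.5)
Your proposal is correct and follows essentially the paper's argument. The nested loci come from Theorems~\ref{thm:classification} and \ref{thm:recurrence} and Corollary~\ref{thm:filtration}. Regular $(q+1)$-orthogonality comes from matching \eqref{eq:degree-recurrence} with \eqref{eq:d-favard}, using $\beta_{n,d}=-c_q\fall{n}{d}\ne0$ for $n\ge d$, and applying Maroni's theorem. Your explicit converse for $\calL_d\setminus\calL_{d-1}$ fills in a direction the paper leaves implicit.

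There is one slip in your closing remark. Rerun the derivation of \eqref{eq:degree-recurrence} for a general $\Delta$-Appell sequence, with $g$ replaced by $A'/A\in\K[[D]]$. It shows that finite bandwidth forces $(1+z)A'(z)/A(z)$ to be a polynomial, not $A'/A$ itself. For example, the Charlier case has $A'/A=-a+(a+\beta)/(1+z)$, which is not a polynomial. This still gives $A'/A\in(1+z)^{-1}\K[z]\subset\K[z,(1+z)^{-1}]$, so your conclusion via Theorem~\ref{thm:recognize-sequence} stands.
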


\begin{proof}
The nested loci follow from Theorems~\ref{thm:classification}, \ref{thm:recurrence}, and \ref{thm:filtration}.  If $q\ge0$, equation \eqref{eq:degree-recurrence} has lower bandwidth $d=q+1$.  In the notation of \eqref{eq:d-favard}, its deepest coefficient is $\beta_{n,d}=-c_q\fall{n}{d}$, which is nonzero for every $n\ge d$.  All coefficients outside the displayed band vanish by \eqref{eq:degree-recurrence}, and the deepest coefficient is nonzero for every $n\ge d$.  These are precisely the band and regularity hypotheses in \eqref{eq:d-favard}; the generalized Favard theorem therefore gives regular $d$-orthogonality.
\end{proof}

\begin{table}[t]
\centering
\caption{Operator and polynomial loci determined by the Laurent shear.}
\label{tab:loci}
\footnotesize
\renewcommand{\arraystretch}{1.2}
\begin{tabularx}{\textwidth}{>{\raggedright\arraybackslash}p{.27\textwidth}>{\raggedright\arraybackslash}p{.27\textwidth}X}
\toprule
Condition on $g(S)$ & Lattice-side consequence & Degree-side consequence\\
\midrule
$g\in\K[S,S^{-1}]$
& A finite-order Laurent-shift triple, possibly with two-sided support
& The general fixed-lowering locus; the recurrence is infinite exactly outside $S^{-1}\K[S]$.\\
$g\in S^{-1}\K[S]$
& The triple lies in some one-sided layer $\calL_m$
& Finite lower bandwidth; these are the finite-band $\Delta$-Appell systems.\\
$g\in\Span\{S^{-1},1,\ldots,S^{m-1}\}$
& The triple lies in $\calL_m$
& Bandwidth at most $m$.\\
$[S^{m-1}]g\ne0$ in the preceding row
& Exact one-sided layer $\calL_m\setminus\calL_{m-1}$
& Exact bandwidth $m$; regular $m$-orthogonality for $m\ge1$.\\
$g=cS^{-1}$
& Minimal layer $\calL_0$
& Translated falling factorials with a two-term recurrence.\\
$g=c_{-1}S^{-1}+c_0$, $c_0<0$
& Positive-orthogonality locus in $\calL_1$; $H$ has three-point support
& Translated monic Charlier systems with a positive three-term recurrence.\\
\bottomrule
\end{tabularx}
\end{table}

In the finite-band case let $r$ be the exact lower bandwidth and define a finite-order difference operator in the degree variable by
\begin{equation}\label{eq:Bg}
 (B_gu)_n=u_{n+1}+(n-h_0)u_n-
 \sum_{k=1}^{r}h_k\fall{n}{k}u_{n-k},
\end{equation}
with $u_j=0$ for $j<0$.  Then
\[
 B_g\bigl(p_\bullet(x)\bigr)=x p_\bullet(x),
 \qquad
 Hp_n(x)=(2n-\lambda)p_n(x).
\]
These identities give simultaneous finite-order spectral equations in the lattice and degree variables.  We use them only in this elementary sense, that is to say no classification of commuting operator algebras or stronger bispectral structure is claimed.

\begin{remark}\label{rem:asymmetry}
The exact lattice endpoints in Theorem~\ref{thm:support} and the recurrence bandwidth in Theorem~\ref{thm:recurrence} are controlled by different ends of the Laurent support of $g$.  Once no powers below $S^{-1}$ occur, the rightmost exponent determines the recurrence length.  Any power below $S^{-1}$ destroys uniform degree bandwidth, although $H$ and $E$ remain finite Laurent-shift operators.  Thus the complement of $\bigcup_m\calL_m$ inside the full Laurent parameter space is exactly the finite-lattice-support/infinite-recurrence region.
\end{remark}

\section{Positive orthogonality and the Charlier locus}

In this section $\K=\R$, so the triple and its monic eigenbasis have real coefficients.  After projecting away the independent parameter $\lambda$, positive orthogonality selects an open half-plane in the two-dimensional shear space $\calL_1$.  The additional condition is a sign restriction on the constant Laurent coefficient.

\begin{theorem}\label{thm:positive}
Let $\{p_n\}_{n\ge0}$ be the monic eigenbasis of a fixed-lowering triple.  The following are equivalent.
\begin{enumerate}[label=\textup{(\roman*)},leftmargin=2.3em]
\item The sequence $\{p_n\}$ is orthogonal with respect to a positive Borel measure on $\R$ having infinite support and finite moments of all orders.
\item There exist $a>0$ and $\beta\in\R$ such that
\begin{equation}\label{eq:charlier-g}
 g(S)=-a+(a+\beta)S^{-1}.
\end{equation}
\item The shear belongs to $\calL_1$,
\[
 g(S)=c_{-1}S^{-1}+c_0,
\]
and its constant coefficient satisfies $c_0<0$.
\end{enumerate}
In this case
\begin{equation}\label{eq:charlier-egf}
 \sum_{n=0}^{\infty}p_n(x)\frac{z^n}{n!}
 =e^{-az}(1+z)^{x+a+\beta},
\end{equation}
\begin{equation}\label{eq:charlier-recurrence}
 xp_n=p_{n+1}+(n-\beta)p_n+anp_{n-1},
\end{equation}
and
\begin{equation}\label{eq:charlier-identification}
 p_n(x)=C_n(x+a+\beta;a),
\end{equation}
where $C_n(\,\cdot\,;a)$ denotes the monic Charlier polynomial.  Up to multiplication by a positive constant, the orthogonality measure is unique.  It is supported at
\[
 x=k-a-\beta,
 \qquad k=0,1,2,\ldots,
\]
with weights proportional to $e^{-a}a^k/k!$.
\end{theorem}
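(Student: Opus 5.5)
The plan is to run the classical Favard theorem through the degree recurrence of Theorem~\ref{thm:recurrence}. For (i)$\Rightarrow$(iii), orthogonality with respect to a positive measure with infinite support and finite moments forces a three-term recurrence
\[
 xp_n=p_{n+1}+b_np_n+\gamma_np_{n-1},
 \qquad \gamma_n>0\quad(n\ge1).
\]
Because $\{p_n\}$ is a basis, the expansion of $xp_n$ is unique. Comparing with \eqref{eq:degree-recurrence} therefore gives $h_k\fall{n}{k}=0$ for all $k\ge2$ and $n\ge k$, so $h_k=0$ for $k\ge2$. Hence $h(D)=Sg(S)$ is a polynomial in $D$ of degree at most one, and Lemma~\ref{lem:intersection} (or directly Theorem~\ref{thm:recurrence}, since finite bandwidth forces $p\ge-1$ and bandwidth at most one forces $q\le0$) gives $g=c_{-1}S^{-1}+c_0$. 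This places the triple in $\calL_1$ by Corollary~\ref{cor:first-layer}. Expanding
\[
 h(D)=c_{-1}+c_0(1+D)
\]
gives $h_0=c_{-1}+c_0$ and $h_1=c_0$. The recurrence then reads
\[
 xp_n=p_{n+1}+(n-c_{-1}-c_0)p_n-c_0np_{n-1},
\]
so $\gamma_n=-c_0n$, and positivity of $\gamma_1$ forces $c_0<0$.

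The equivalence (iii)$\Leftrightarrow$(ii) is the reparametrization $a=-c_0$, $\beta=c_{-1}+c_0$, that is, $c_{-1}=a+\beta$. Under this substitution the recurrence becomes \eqref{eq:charlier-recurrence}. Formula \eqref{eq:A-explicit} gives
\[
 A(z)=(1+z)^{a+\beta}e^{-az},
\]
which yields \eqref{eq:charlier-egf}.

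For (ii)$\Rightarrow$(i) I would identify the polynomials with Charlier polynomials. Writing $y=x+a+\beta$, the generating function becomes $e^{-az}(1+z)^y$, which is the standard generating function of the monic Charlier polynomials $C_n(y;a)$; equivalently, the shifted recurrence $yC_n=C_{n+1}+(n+a)C_n+anC_{n-1}$ matches \eqref{eq:charlier-recurrence} after substituting $x=y-a-\beta$. This proves \eqref{eq:charlier-identification}. Orthogonality then follows from the classical Poisson weight $e^{-a}a^k/k!$ at $y=k$, translated to the points $x=k-a-\beta$. To keep the argument self-contained, I would verify this from the generating function via
\[
 \sum_k \frac{e^{-a}a^k}{k!}(1+z)^k(1+w)^k=e^{a(z+w+zw)}.
\]
Multiplying by $e^{-az-aw}$ leaves $e^{azw}$, which is diagonal in $z^mw^n$ with positive norms $a^nn!$.

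Uniqueness of the measure is the step needing care. I would invoke determinacy of the Hamburger moment problem. The recurrence coefficients satisfy $\gamma_n=an$, so $\sum\gamma_n^{-1/2}=\infty$, and Carleman's criterion applies. Alternatively, the Poisson moments grow like Bell-type numbers, slowly enough for Carleman's condition. Either route shows the measure is unique up to scaling, since it is determined by its moments, which are themselves fixed by the recurrence. I expect the only genuine obstacle to be keeping the sign and translation bookkeeping consistent between \eqref{eq:charlier-g}, the recurrence, and the support points; the structural content is immediate from Theorem~\ref{thm:recurrence}.
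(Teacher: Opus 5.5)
Your proposal is correct and follows the paper's proof essentially step for step. The paper's argument runs the same way: Favard's theorem plus uniqueness of expansion against \eqref{eq:degree-recurrence} forces $h_k=0$ for $k\ge2$, the sign of the $p_{n-1}$ coefficient forces $c_0<0$ (equivalently $a>0$), the generating function identifies translated monic Charlier polynomials, and Carleman's criterion with $\gamma_n=an$ gives uniqueness. The differences are cosmetic: you pass from (i) to (iii) rather than (ii), you read the generating function off \eqref{eq:A-explicit}, and you add a self-contained check of the Poisson orthogonality.
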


\begin{proof}
Assume (i).  By Favard's theorem \cite[Chapter~I, Section~4]{Chihara1978}, multiplication by $x$ has a three-term recurrence
\[
 xp_n=p_{n+1}+b_np_n+\gamma_np_{n-1},
 \qquad \gamma_n>0.
\]
Expansion in the basis $\{p_j\}$ is unique.  Comparing with Theorem~\ref{thm:recurrence} therefore gives $h_k=0$ for every $k\ge2$, so $h(D)=h_0+h_1D$.  The coefficient of $p_{n-1}$ is $-h_1n$, and positivity for every $n\ge1$ forces $h_1<0$.  Write $h_0=\beta$ and $h_1=-a$ with $a>0$.  Then
\[
 g(D)=\frac{h(D)}{1+D}
 =\frac{\beta-aD}{1+D}
 =-a+\frac{a+\beta}{1+D},
\]
which is \eqref{eq:charlier-g}.

Conversely, assume (ii).  Then $h(D)=Sg(S)=\beta-aD$, and Theorem~\ref{thm:recurrence} gives \eqref{eq:charlier-recurrence}.  Moreover,
\[
 G'(z)=-a+\frac{a+\beta}{1+z},
\]
so $G(0)=0$ gives
\[
 G(z)=-az+(a+\beta)\log(1+z).
\]
Equation \eqref{eq:charlier-egf} follows from Theorem~\ref{thm:eigenbasis}.  This is the standard exponential generating function for the monic Charlier polynomials \cite{KoekoekLeskySwarttouw2010,Meixner1934}, proving \eqref{eq:charlier-identification}.  The monic Charlier polynomials $C_n(y;a)$ are orthogonal for the Poisson measure supported at $y=k$ with weights $e^{-a}a^k/k!$.  Translating $y=x+a+\beta$ gives an orthogonality measure with the stated support and weights.

It remains to justify uniqueness of the positive measure.  In the monic recurrence \eqref{eq:charlier-recurrence}, the subdiagonal coefficient is $\gamma_n=an$.  Hence the off-diagonal Jacobi coefficient for the corresponding orthonormal system is $\sqrt{\gamma_n}=\sqrt{an}$.  Since
\[
 \sum_{n=1}^{\infty}\frac{1}{\sqrt{an}}=\infty,
\]
Carleman's criterion for the Hamburger moment problem implies determinacy; see, for example, \cite{Chihara1978}.  Therefore, every positive orthogonality measure in (i) is a positive scalar multiple of the translated Poisson measure above.

Finally, (ii) and (iii) are equivalent under
\[
 c_0=-a,
 \qquad c_{-1}=a+\beta,
 \qquad
 a=-c_0,
 \qquad \beta=c_{-1}+c_0.
\]
\end{proof}

\begin{corollary}\label{cor:charlier-equation}
Under the equivalent conditions of Theorem~\ref{thm:positive},
\[
 \frac{H+\lambda}{2}
 =-aS+(x+2a+\beta)-(x+a+\beta)S^{-1}.
\]
Hence $Hp_n=(2n-\lambda)p_n$ is equivalent to
\begin{equation}\label{eq:translated-charlier-equation}
 -ap_n(x+1)+(x+2a+\beta)p_n(x)
 -(x+a+\beta)p_n(x-1)=np_n(x).
\end{equation}
After the translation $x'=x+a+\beta$, this becomes
\[
 -au(x'+1)+(x'+a)u(x')-x'u(x'-1)=nu(x'),
\]
the classical Charlier difference equation.
\end{corollary}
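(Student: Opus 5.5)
The plan is to compute $H$ directly from the shift form \eqref{eq:H-shift-form} with the Charlier shear, and then read off the eigenvalue equation from the action of shifts on polynomials.

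Under the equivalent conditions of Theorem~\ref{thm:positive} we have $g(S)=-a+(a+\beta)S^{-1}$. Substituting this into \eqref{eq:H-shift-form} gives
\[
 H+\lambda=2x-2xS^{-1}+2g(S)(S-1).
\]
We expand $g(S)(S-1)$ termwise:
\[
 g(S)(S-1)=-aS+a+(a+\beta)-(a+\beta)S^{-1}=-aS+(2a+\beta)-(a+\beta)S^{-1}.
\]
Since $g$ has constant coefficients, no ordering issue arises. Dividing by $2$ and collecting the terms at $S^0$ and $S^{-1}$ yields
\[
 \frac{H+\lambda}{2}=-aS+(x+2a+\beta)-(x+a+\beta)S^{-1},
\]
which is the displayed operator identity.

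Next, $Hp_n=(2n-\lambda)p_n$ holds if and only if $\tfrac{H+\lambda}{2}p_n=np_n$. We apply $(Sf)(x)=f(x+1)$ and $(S^{-1}f)(x)=f(x-1)$, remembering that the polynomial coefficient $x$ multiplies after the shift, consistent with the normal ordering $t_j(x)S^j$. This gives exactly \eqref{eq:translated-charlier-equation}. Because the passage from the operator identity to the scalar equation is a pointwise rewriting, the two statements are equivalent. Alternatively, the scalar equation follows from Corollary~\ref{cor:explicit-equation} with $c_0=-a$ and $c_{-1}=a+\beta$, and the two routes can be cross-checked.

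Finally, we put $x'=x+a+\beta$ and $u(x')=p_n(x'-a-\beta)$. Translation commutes with shifts, so $p_n(x\pm1)=u(x'\pm1)$. The coefficient $x+2a+\beta$ becomes $x'+a$, and $x+a+\beta$ becomes $x'$. This produces the classical Charlier equation, which is consistent with \eqref{eq:charlier-identification}.

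The only real obstacle is bookkeeping. We must keep $x$ to the left of $S^{-1}$ in $XD=x-xS^{-1}$, and we must not drop the constant $a$ coming from $-a\cdot(-1)$ in $g(S)(S-1)$. Once these are handled, the rest is routine.
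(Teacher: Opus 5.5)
Your proposal is correct and follows the paper's own proof: substitute $g(S)=-a+(a+\beta)S^{-1}$ into \eqref{eq:H-shift-form}, collect the coefficients of $S$, $1$ and $S^{-1}$, apply the result to $p_n$, and translate by $x'=x+a+\beta$. Your optional cross-check through Corollary~\ref{cor:explicit-equation} with $c_0=-a$ and $c_{-1}=a+\beta$ gives the same equation, so it is a consistent extra verification.
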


\begin{proof}
Substitute \eqref{eq:charlier-g} into \eqref{eq:H-shift-form} and collect the coefficients of $S$, $1$, and $S^{-1}$.
\end{proof}

\begin{remark}
Meixner's orthogonal-Sheffer classification is much broader than Theorem~\ref{thm:positive}.  The point here is the exact intersection is created by fixing the delta operator $D=S-1$ and requiring the full $\mathfrak{sl}_2$-triple to lie in the finite Laurent-shift algebra.  Within this normalized class, and after suppressing the independent $\lambda$-coordinate, positive orthogonality is the open half-plane $c_0<0$ in the $(c_{-1},c_0)$-parameter plane.  The full locus of triples is its product with the $\lambda$-line.
\end{remark}

\section{Examples}

\begin{example}
Let $g(S)=cS^{-1}$.  Then
\[
 Y=(x+c)S^{-1},
 \qquad
 p_n(x)=(x+c)^{\underline n},
\]
and
\[
 xp_n=p_{n+1}+(n-c)p_n.
\]
This is the minimal one-sided layer $\calL_0$.  Its recurrence is two-term and therefore does not define a nondegenerate positive orthogonality measure of infinite support.
\end{example}

\begin{example}
Let
\[
 g(S)=-a+(a+\beta)S^{-1},
 \qquad a>0.
\]
Then $H$ is a three-point difference operator, the degree recurrence is three-term, and the eigenbasis is the translated monic Charlier system of Theorem~\ref{thm:positive}.
\end{example}

\begin{example}
Let $m\ge1$ and $g(S)=cS^m$ with $c\ne0$.  Then
\[
 H=2x-2xS^{-1}+2c(S^{m+1}-S^m)-\lambda,
\]
and
\[
 \sum_{n\ge0}p_n(x)\frac{z^n}{n!}
 =\exp\!\left(\frac{c}{m+1}\bigl((1+z)^{m+1}-1\bigr)\right)(1+z)^x.
\]
Thus the rightmost exponent $m$ gives the exact stratum $\calL_{m+1}\setminus\calL_m$ and lower bandwidth $m+1$; the recurrence runs from $p_{n+1}$ to $p_{n-m-1}$.
\end{example}

\begin{example}\label{ex:quadratic-band}
Let $g(S)=cS$ with $c\ne0$.  Then the exact lower bandwidth is two and
\[
 H=2cS^2-2cS+(2x-\lambda)-2xS^{-1}.
\]
Using \eqref{eq:E-expand} gives the complete raising operator
\[
\begin{aligned}
 E={}&c^2S^3-c^2S^2
 +\bigl(2cx+c-\lambda c\bigr)S
 -(2cx+c)\\
 &+x(x-1-\lambda)S^{-1}-x(x-1)S^{-2}.
\end{aligned}
\]
Thus this is an explicit finite-stencil Cartan/raising system rather than only an abstract shear.  The eigenpolynomials have generating function
\[
 \sum_{n\ge0}p_n(x)\frac{z^n}{n!}
 =\exp\!\left(cz+\frac{c}{2}z^2\right)(1+z)^x
\]
and satisfy
\[
 xp_n=p_{n+1}+(n-c)p_n-2cnp_{n-1}
 -c\fall{n}{2}p_{n-2}.
\]
The associated scalar lattice equation is
\[
 x\bigl(p_n(x)-p_n(x-1)\bigr)
 +c\bigl(p_n(x+2)-p_n(x+1)\bigr)=np_n(x).
\]
This example lies in $\calL_2\setminus\calL_1$ and is a regular $2$-orthogonal $\Delta$-Appell system.
\end{example}

\begin{example}\label{ex:negative-power}
Let $r\ge2$ and $g(S)=cS^{-r}$ with $c\ne0$.  Then $H$ and $E$ remain finite-order Laurent-shift operators, but
\[
 h(D)=Sg(S)=c(1+D)^{1-r}
 =c\sum_{k=0}^{\infty}(-1)^k\binom{r+k-2}{k}D^k.
\]
Thus Theorem~\ref{thm:recurrence} gives the explicit recurrence
\begin{equation}\label{eq:infinite-tail-example}
 xp_n=p_{n+1}+(n-c)p_n
 -c\sum_{k=1}^{n}(-1)^k\binom{r+k-2}{k}
 \fall{n}{k}p_{n-k}.
\end{equation}
Every depth $k$ occurs once $n\ge k$, so there is no uniform finite lower bandwidth.  On the lattice side, however, Theorem~\ref{thm:support} gives
\[
 \nu_-(H)=-r,
 \quad \nu_+(H)=0,
 \qquad
 \nu_-(E)=-2r,
 \quad \nu_+(E)=-1.
\]
The eigenpolynomials themselves are unremarkable: integrating $G'(z)=c(1+z)^{-r}$ with $G(0)=0$ gives
\[
 \sum_{n\ge0}p_n(x)\frac{z^n}{n!}
 =\exp\Bigl(\frac{c}{1-r}\bigl((1+z)^{1-r}-1\bigr)\Bigr)(1+z)^x,
\]
a closed-form $\Delta$-Appell generating function.  This is an explicit separation between finite Laurent-shift support and finite degree-side recurrence: the former holds and the latter fails.
\end{example}

\begin{example}\label{ex:mixed-shear}
Let
\[
 g(S)=\alpha S^{-2}+\beta+\gamma S^2,
 \qquad
 \alpha\gamma\ne0.
\]
The lowest exponent $-2$ and highest exponent $2$ control opposite ends of the lattice stencil:
\[
 \nu_-(H)=-2,
 \quad \nu_+(H)=3,
 \qquad
 \nu_-(E)=-4,
 \quad \nu_+(E)=5.
\]
On the degree side,
\[
 h(D)=Sg(S)=\alpha(1+D)^{-1}+\beta(1+D)+\gamma(1+D)^3.
\]
Thus $h_k=\alpha(-1)^k$ for every $k\ge4$, and the recurrence contains the nonzero term
\[
 -\alpha(-1)^k\fall{n}{k}p_{n-k}
\]
for every $k\ge4$ and $n\ge k$.  The coefficient $\alpha$ determines the infinite left recurrence tail, while $\gamma$ determines the right stencil endpoints of $H$ and $E$.
\end{example}

\section{Conclusion and outlook}

The fixed-lowering inverse problem has a complete answer in the Laurent-shift algebra.  Prescribing $F=1-S$ reduces every completion to a unique Laurent shear, but the significance of that parameter is support-theoretic in that it simultaneously records the exact lattice endpoints, the polynomial eigenbasis, and the locality or nonlocality of multiplication by $x$ in degree. The one-sided and complementary regions behave sharply differently.  For $g\in S^{-1}\K[S]$, finite recurrence bandwidth is equivalent to the minimal possible left endpoints of both $H$ and $E$, and the right endpoint of $H$ is exactly the bandwidth.  Below $S^{-1}$, finite lattice locality survives while degree locality fails.  The eventual recurrence tail nevertheless retains exact information since its polynomial profile recovers the lowest Laurent term.  Thus, the passage from lattice stencils to degree recurrences loses finiteness but not all support data.

The unit-step normalization is inessential and may be rescaled to a nonzero forward step.  A genuinely different problem begins when the prescribed lowering operator already has two-sided support.  For example, consider
\[
 F=-\frac{S-S^{-1}}{2}.
\]
Heisenberg pairs associated with central and more general finite-difference schemes have been studied in \cite{GorskiSzmigielski1998}.  The additional problem suggested by the present paper is more rigid since we classify the completions of this actual two-sided lowering operator inside the finite Laurent-shift algebra, while retaining its stencil data, and determine whether degree-side recurrence locality can still be read from intrinsic endpoint information.  We leave that fixed-lowering classification for future work.

\section*{Acknowledgements}

The author would like to thank Paul R.~Garvey for insightful discussions and input on the finite-difference calculus during this work. 

\section*{Disclosure Statement}

The author reports no conflict of interest.

\section*{Declaration of generative AI use}

The author reports generative AI was not used in their research or preparation of this manuscript.

\end{document}